\documentclass[a4paper,12pt]{article}
\usepackage{amsfonts}
\usepackage{amssymb}
\usepackage{amsmath}
\usepackage{amsthm}
\usepackage{bm}
\usepackage{here}
\usepackage[dvipdfmx]{graphicx}
\usepackage{tikz}
\usetikzlibrary{intersections,calc,arrows}
\usepackage{color}
\usepackage{cases}
\numberwithin{equation}{section}

\theoremstyle{definition}
\newtheorem{defn}{Definition}[section]
\newtheorem{rem}[defn]{Remark}
\theoremstyle{plain}
\newtheorem{lemm}[defn]{Lemma}

\newtheorem{thm}[defn]{Theorem}
\newtheorem{cor}[defn]{Corollary}

\title{An integral representation of eigenfunctions for the deformed Noumi--Sano operators}
\author{
	Taikei Fujii 
	and  
	Takahiko Nobukawa
}
\date{}
\allowdisplaybreaks
\begin{document}
\maketitle
\renewcommand{\labelenumi}{\rm (\arabic{enumi})}
\begin{abstract}	
The deformed Noumi--Sano operator $H_{n, m}^d(x, y)$ of order $d$ is a simultaneous extension of the Macdonald operator and the Noumi--Sano operator.
In this paper, we construct an  integral transform associated with eigenvalue problem of  $H_{n, m}^d(x, y)$. 
Using the integral transform, we obtain an integral representation of eigenfunctions for $H_{n, m}^d(x, y)$.
	
Key words: $q$-hypergeometric integral, Macdonald polynomial, $q$-difference equation.
	
2020 Mathematics Subject Classification Numbers: 39A13, 33D52, 44A20.
\end{abstract}

\section{Introduction}\label{secintro} 
We fix $q$, $t \in \mathbb{C}$ with $0 < |q|,\, |t| < 1$.
The shift operator $T_{q,x}^{\pm\mu}=\prod_{i=1}^n T_{q,x_i}^{\pm \mu_i}$ is defined by $(T_{q,x}^{\pm\mu}f)(x_1,\ldots,x_n)=f(q^{\pm \mu_1}x_1,\ldots,q^{\pm \mu_n}x_n)$.
Halln\"as, Langmann, Noumi and Rosengren \cite{HLNR} introduced a family of $q$-difference operators defined by
\begin{align}\label{DNSop}
		&\notag H_{n,m}(x,y;u; q, t) = H_{n,m}(x,y;u)= \sum_{d=0}^{\infty}u^d H_{n,m}^d(x,y)\\
		&:=\sum_{\mu\in(\mathbb{Z}_{\geq0})^n}\sum_{I\subset\{1,\ldots,m\}}(t^{1-n}q^m u)^{|\mu|}(-tu)^{|I|} q^{\binom{|I|}{2}}B_{\mu,I}(x,y)T_{q,x}^{\mu}T_{t,y}^{-I}
		,\\
		&\notag B_{\mu,I}(x,y)=\frac{\Delta(q^\mu x)}{\Delta(x)}\prod_{i,j=1}^n\frac{(tx_i/x_j;q)_{\mu_i}}{(qx_i/x_j;q)_{\mu_i}}\prod_{\substack{1\leq i,j\leq m\\i\in I,j\notin I}}\frac{y_i-qy_j}{y_i-y_j}\\
		&\quad \quad \quad \quad \quad \quad \times \prod_{i=1}^n \left(\prod_{j\in I}\frac{1-x_i/(ty_j)}{1-q^{\mu_i}x_i/y_j}\prod_{j\notin I}\frac{1-x_i/(q y_j)}{1-q^{\mu_i-1}x_i/y_j}\right),
	\end{align}
where $|\mu|=\mu_1+\cdots+\mu_n$, $q^\mu x=(q^{\mu_1}x_1,\ldots,q^{\mu_n}x_n)$, $\Delta(x)=\prod_{i<j}(x_i-x_j)$ and $(a; q)_{n} = \prod_{k=0}^{n - 1} (1 - a q^k)$. 
The operator $H_{n, m}(x, y;u)$ is called the deformed Noumi--Sano operator.
This operator $H_{n, m}(x, y;u)$ is a simultaneous extension of the Macdonald operator of type $A$ \cite{Mac, N} and the Noumi--Sano operator \cite{NS21}.
More precisely, $H_{0,m}(y^{-1}, u/t; t, q)$  coincides with the Macdonald operator, and $H_{n, 0}(x; u/t; q, t)$ is the Noumi--Sano operator (see \cite{FS, HLNR, SV09} for detailed properties).
The operator \eqref{DNSop} has super-Macdonald polynomials \cite{SV09} as eigenfunctions. 
Our aim is to construct an integral transform associated with the eigenvalue problem for the deformed Noumi--Sano operator $ H_{n,m}^d(x,y)$.

We define the function $\omega_{n,m;N,M}(x,y;z,w)$ of the variables $x = (x_1, \ldots, x_n)$, $y = (y_1, \ldots, y_m)$, $z = (z_1, \ldots, z_N)$ and $w = (w_1, \ldots, w_M)$ as
\begin{align}
	\omega_{n,m;N,M}(x,y;z,w)=x^{\alpha} y^{\beta}\Psi_{n,m;N,M}(x,y;z,w) \rho(x, y),
\end{align}
for $\alpha, \beta \in \mathbb{C}$, where $(a; p)_{\infty} = \prod_{k=0}^{\infty}(1 - a p^k)$ and
\begin{align}
&\rho(x,y) =
 \frac{
	\left(
	\prod_{1 \leq i \neq j \leq n}(x_i/x_j; q)_{\infty}/(t x_i/x_j; q)_{\infty}
	\right)
	\left(
	\prod_{1 \leq i \neq j \leq m}(y_i/y_j; t)_{\infty}/(q y_i/y_j; t)_{\infty}
	\right)
}{\prod_{i=1}^{n}\prod_{j=1}^{m} (1- q^{-1}x_i/y_j)(1 - t y_j/x_i)}, \\
&\notag \Psi_{n,m;N,M}(x,y;z,w)\\
&\label{KFforDNS}=
\prod_{i=1}^n\prod_{j=1}^N\frac{(x_iz_j;q)_\infty}{(x_iz_j/t;q)_\infty}\prod_{i=1}^m\prod_{j=1}^M \frac{(q t y_iw_j;t)_\infty}{(t y_iw_j;t)_\infty}\prod_{i=1}^n\prod_{j=1}^M (1- x_iw_j)\prod_{i=1}^m\prod_{j=1}^N(1- y_iz_j).
\end{align}
The main result of this paper is as follows.
\begin{thm}\label{Main thm}
Assume $q^{\alpha} = t^{-\beta}$, $r < |q|^d R$, and $|q| < |t|$, where $\alpha, \beta \in \mathbb{Z}$ and $r, R \in \mathbb{R}_{> 0}$. 
Suppose that a function  $\varphi(x, y)$ satisfies the following conditions: 
\begin{itemize}
\item[1.] The function $\varphi(x, y)$ is a simultaneous eigenfunction of $H^{k}_{n, m}$ $(0 \leq k \leq d)$, i.e.
\begin{align}
	H^{k}_{n,m}(t x^{-1},q^{-1}y^{-1}) \varphi(x, y) = E_k \varphi(x, y), \quad 0 \leq k \leq d.
\end{align}
\item[2.]
The function $\varphi(x, y)$ is holomorphic on $D = \{(x, y)\mid |q|^{d}R \leq |x_{i}| \leq R,\, r \leq |y_{j}| \leq |t|^{-1} r \, (1 \leq i \leq n, \, 1 \leq j \leq m) \}$.
\end{itemize}
We put $C_d = \mathbb{T}^n_{R} \times \mathbb{T}^m_{r}$, 
where $\mathbb{T}^s_{p}$ denotes the $s$-dimensional torus of radius $p$.
If $|z_{j}|<|t|/R , \, |w_{j}|<1/r$, then the function 
\begin{align}
\Omega(z, w)= \int_{C_d}  \varphi(x, y) \omega_{n,m;N,M}(x,y;z,w)  \frac{dx}{x}\frac{dy}{y}
\end{align}
satisfies 
\begin{align}
	H^{d}_{N,M}(z,w)\Omega(z, w)
	=
	\left( 
	\sum_{k+l = d}\frac{(t^{N - n} q^{m - M}; q)_k}{(q; q)_k} (t^{1-N}q^M)^k t^{\beta l}E_l
	\right) \Omega(z, w). 
	\end{align}
\end{thm}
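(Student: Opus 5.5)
The strategy is the standard kernel-function approach. I first establish a kernel function identity for $\Psi_{n,m;N,M}$ relating the deformed Noumi--Sano operators in the two sets of variables. Concretely, I aim to show
\begin{align*}
H_{N,M}(z,w;u)\Psi_{n,m;N,M}(x,y;z,w)
=\frac{(t^{N-n}q^{m-M}t^{1-N}q^M u;q)_\infty}{(t^{1-N}q^M u;q)_\infty}\, H_{n,m}(tx^{-1},q^{-1}y^{-1};t^{\beta}u)^{*}\Psi_{n,m;N,M}(x,y;z,w).
\end{align*}
Here the star on the right-hand side denotes an appropriate transpose of the operator in $(x,y)$. The scalar prefactor is designed so that its $u^k$ coefficient is $\frac{(t^{N-n}q^{m-M};q)_k}{(q;q)_k}(t^{1-N}q^M)^k$, by the $q$-binomial theorem. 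Taking the coefficient of $u^d$ on both sides gives
\begin{align*}
H^d_{N,M}(z,w)\Psi=\sum_{k+l=d}\frac{(t^{N-n}q^{m-M};q)_k}{(q;q)_k}(t^{1-N}q^M)^k\, t^{\beta l}\,(\text{$l$-th component in }x,y)\Psi.
\end{align*}
I would derive this identity from the known kernel identity of Halln\"as--Langmann--Noumi--Rosengren for the deformed operators (Cauchy-type kernels). The extra factor $t^{\beta l}$ and the precise normalization then come from conjugating by $x^\alpha y^\beta$. The condition $q^\alpha=t^{-\beta}$ ensures that the shifts $T_{q,x}^{\mu}T_{t,y}^{-I}$ act on $x^\alpha y^\beta$ by a single scalar, namely $(q^\alpha)^{|\mu|}t^{-\beta|I|}$, which can then be absorbed into $u$.

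Next I apply $H^d_{N,M}(z,w)$ under the integral sign, which is justified because the integrand is holomorphic in $(z,w)$ in the stated region. I then replace the $(z,w)$-operator acting on $\Psi$ by the $(x,y)$-operators. The key step is a "$q$-integration by parts." For each term $B_{\mu,I}T_{q,x}^{\mu}T_{t,y}^{-I}$ with $|\mu|+|I|\le d$, I change variables $x_i\mapsto q^{-\mu_i}x_i$ and $y_j\mapsto t^{I_j}y_j$. This moves the shift from $\Psi$ onto $\varphi\,x^\alpha y^\beta\rho$. The weight $\rho$ is chosen so that it converts $H_{n,m}$ into its formal adjoint, namely the operator $H^k_{n,m}(tx^{-1},q^{-1}y^{-1})$ of condition 1. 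After this I use the eigenvalue equation to replace each $l$-th component by $E_l$, and summing yields the claimed eigenvalue.

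The main obstacle is justifying the contour shift. After the change of variables the torus $\mathbb{T}_R$ becomes $\mathbb{T}_{|q|^{\mu_i}R}$ with $\mu_i\le d$, and $\mathbb{T}_r$ becomes $\mathbb{T}_{|t|^{-1}r}$. Moving these contours back to $C_d$ requires the integrand to be holomorphic on the annuli $|q|^dR\le|x_i|\le R$ and $r\le|y_j|\le|t|^{-1}r$. For $\varphi$ this is condition 2. For $\Psi$ it follows from $|z_j|<|t|/R$ and $|w_j|<1/r$, which keep the infinite products away from their zeros and poles. For $\rho$ and the coefficients $B_{\mu,I}$, the apparent poles at $x_i=qy_j$, $x_i=ty_j$, $x_i=x_j$, and similar loci must be shown to cancel. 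These loci are exactly the ones excluded by $r<|q|^dR$ and $|q|<|t|$. I would verify that the $(1-q^{\mu_i-1}x_i/y_j)$-type denominators in $B_{\mu,I}$ are cancelled by the factors $(1-q^{-1}x_i/y_j)(1-ty_j/x_i)$ in $\rho$ after shifting, and that the cross-ratio poles $y_i=y_j$ and $x_i=x_j$ cancel by symmetrization over $I$ and over $\mu$. This symmetrization step, checked via the residue cancellation familiar from Macdonald operator integral identities, is where I expect most of the effort.
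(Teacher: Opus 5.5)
Your overall route is the paper's: the Halln\"as--Langmann--Noumi--Rosengren kernel identity, the $u^d$ coefficient via the $q$-binomial theorem, the termwise substitution $x\to q^{-\mu}x$, $y\to t^{I}y$, an adjointness relation for $B_{\mu,I}\rho$, contour deformation back to $C_d$, the collapse $q^{-\alpha|\mu|}t^{\beta|I|}=t^{\beta l}$, and finally the eigenvalue equations. Your ``transposed'' kernel identity is just that identity, which has the ordinary operator $H_{n,m}(x,y;u)$ on the right, rewritten using the adjointness; the paper keeps the two steps separate.

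Your plan goes wrong exactly at the step you single out as the main work. Symmetrizing over $\mu$ and $I$ cannot help. Each $(\mu,I)$-term is carried to its own contour $q^{\mu}\mathbb{T}^n_R\times t^{-I}\mathbb{T}^m_r$ and has to be moved back individually, so holomorphy on $D$ must hold term by term. Moreover, $x_i=x_j$ and $y_i=y_j$ lie on $C_d$ itself, so a genuine pole there would make a single term meaningless before any shift. What you need instead is the exact identity $T^{-\mu}_{q,x}T^{I}_{t,y}\bigl(B_{\mu,I}(x,y)\rho(x,y)\bigr)=B_{\mu,I}(tx^{-1},q^{-1}y^{-1})\rho(x,y)$. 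The paper proves it factor by factor, after splitting $B_{\mu,I}\rho$ into a mixed $(x,y)$ part, a pure $x$ part and a pure $y$ part. In the resulting integrand, the denominators $\Delta(tx^{-1})$ and $(qx_j/x_i;q)_{\mu_i}$ are killed, term by term, by the zeros of $\prod_{i\ne j}(x_i/x_j;q)_\infty$ in $\rho$. The denominators $y_j-y_i$ are killed by the zeros of $\prod_{i\ne j}(y_i/y_j;t)_\infty$. No cancellation between different terms is involved.

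You also have the mixed factors the wrong way round. The product $(1-q^{-1}x_i/y_j)(1-ty_j/x_i)$ sits in the denominator of $\rho$, so it cannot cancel denominators of $B_{\mu,I}$. After the shift, the numerators of $B_{\mu,I}(tx^{-1},q^{-1}y^{-1})$ become $1-qy_j/x_i$ for $j\in I$ and $1-ty_j/x_i$ for $j\notin I$. Each removes one of the two factors of $\rho$ for that pair $(i,j)$. The following are genuine poles:
\begin{itemize}
\item[(a)] the remaining factor of $\rho$ for each pair $(i,j)$;
\item[(b)] the new denominators $1-q^{\mu_i+1}ty_j/x_i$ for $j\in I$ and $1-q^{\mu_i}ty_j/x_i$ for $j\notin I$;
\item[(c)] the loci $x_i=tq^{k+\mu_i}x_j$ and $y_i=qt^{k}y_j$ inherited from $\rho$;
\item[(d)] the poles of $\Psi$.
\end{itemize}
They are not cancelled; they lie outside $D$, and that is exactly what $r<|q|^dR$, $|q|<|t|$, $|z_j|<|t|/R$ and $|w_j|<1/r$ guarantee.

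Finally, ``for $\varphi$ this is condition~2'' is not literally correct. After the substitution the factor is $\varphi(q^{-\mu}x,t^{I}y)$, so the deformation uses holomorphy of $T^{-\mu}_{q,x}T^{I}_{t,y}\varphi$ on $D$. That means holomorphy of $\varphi$ on $R\le|x_i|\le|q|^{-d}R$, $|t|r\le|y_j|\le r$. The paper's wording of condition~2 has the same slip.
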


\begin{rem}
The function $\rho(x, y)$ coincides with the function $\Delta_{n, m}(x, y; q, t)$ in \cite{AHL21} with $x$ and $y$ replaced by $t^{1/2}x$ and $q^{-1/2}y$, respectively.
The function $\Psi_{n,m;N,M}(x,y;z,w)$ is a kernel function of the deformed Noumi--Sano operator \cite[Theorem 2.1, Remark 2.2]{HLNR} as
 \begin{align}
&\notag H_{N, M}(z, w; u)\Psi_{n,m;N,M}(x,y;z,w)\\
 & = \frac{(t^{1-n}q^m u; q)_{\infty}}{(t^{1-N}q^M u; q)_{\infty}}H_{n, m}(x, y; u)\Psi_{n,m;N,M}(x,y;z,w). 
 \end{align} 
 \end{rem}
 \begin{rem}
 When $n=N=0$, our integral $\Omega(z, w)$ coincides with the integral in \cite{MN97}.
\end{rem}
Due to the explicit formula \cite[equation (36)]{HLNR}
\begin{align}
	H^{l}_{n,m}(x, y) 1 = \frac{(t^{n}q^{-m};q)_{l}}{(q;q)_{l}} (t^{1-n}q^{m})^l 1, 
\end{align}
we obtain an eigenfunction of $H^{d}_{N,M}(z,w)$ as follows:
\begin{cor}
Assume $q^{\alpha} = t^{-\beta}$, $r < |q|^d R$, and $|q| < |t|$, where $\alpha, \beta \in \mathbb{Z}$ and $r, R \in \mathbb{R}_{> 0}$. 
If $|z_{j}|<|t|/R , \, |w_{j}|<1/r$, then the following equation holds: 
\begin{align}
	&H^{d}_{N,M}(z,w)\int_{\mathbb{T}^n_{R} \times \mathbb{T}^m_{r}} \omega_{n,m;N,M}(x,y;z,w)  \frac{dx}{x}\frac{dy}{y} \notag \\
	&\notag=
	\left( 
	\sum_{k+l = d}\frac{(t^{N - n} q^{m - M}; q)_k (t^{n}q^{-m};q)_{l}}{(q; q)_k (q;q)_{l}} (t^{1-N}q^M)^k (q^{m }t^{\beta + 1- n})^l
	\right) \\
	&\quad \quad \quad \quad \times
	\int_{\mathbb{T}^n_{R} \times \mathbb{T}^m_{r}} \omega_{n,m;N,M}(x,y;z,w)  \frac{dx}{x}\frac{dy}{y}. 
	\end{align}
\end{cor}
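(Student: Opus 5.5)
The corollary follows by applying Theorem~\ref{Main thm} to the constant function $\varphi(x,y)\equiv 1$. Then $\Omega(z,w)$ is exactly the integral $\int_{\mathbb{T}^n_{R}\times\mathbb{T}^m_{r}}\omega_{n,m;N,M}(x,y;z,w)\,\frac{dx}{x}\frac{dy}{y}$ on both sides of the claimed identity. The hypotheses on $\alpha,\beta,q,t,r,R$ and on $z,w$ coincide with those of the theorem, so only conditions 1 and 2 on $\varphi$ need checking. Condition 2 is trivial, since a constant is holomorphic everywhere, in particular on $D$.

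For condition 1, we must compute $H^{k}_{n,m}(tx^{-1},q^{-1}y^{-1})\,1$ for $0\le k\le d$. Each $H^k_{n,m}(x,y)$ is a sum of coefficient functions times shift operators, and shifts fix constants. Hence $H^k_{n,m}(x,y)1$ is a function of $(x,y)$, and by the explicit formula \cite[equation (36)]{HLNR} it equals the constant
\[
c_k=\frac{(t^{n}q^{-m};q)_{k}}{(q;q)_{k}}(t^{1-n}q^{m})^{k}
\]
identically in $(x,y)$. The substitution $(x,y)\mapsto(tx^{-1},q^{-1}y^{-1})$ only changes the variables in which the coefficients are expressed; it conjugates $T_{q,x}^{\mu}$ into $T_{q,x}^{-\mu}$ and $T_{t,y}^{-I}$ into $T_{t,y}^{I}$, which still fix constants. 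Therefore $H^{k}_{n,m}(tx^{-1},q^{-1}y^{-1})1=c_k$ as well, and condition 1 holds with $E_k=c_k$.

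Applying the theorem with $E_l=c_l$ gives
\[
t^{\beta l}E_l=\frac{(t^{n}q^{-m};q)_{l}}{(q;q)_{l}}\bigl(q^{m}t^{\beta+1-n}\bigr)^{l}.
\]
Substituting this into the eigenvalue $\sum_{k+l=d}\frac{(t^{N-n}q^{m-M};q)_k}{(q;q)_k}(t^{1-N}q^M)^k t^{\beta l}E_l$ of the theorem yields exactly the stated eigenvalue.

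The only point needing care is the claim that formula (36) holds as an identity of functions. This makes it valid after the change of variables $x\mapsto tx^{-1}$, $y\mapsto q^{-1}y^{-1}$, and not merely at special points. If one prefers not to rely on this reading of (36), the argument can be made airtight as follows. Write $H^k_{n,m}(tx^{-1},q^{-1}y^{-1})1$ as $\sum B_{\mu,I}(tx^{-1},q^{-1}y^{-1})$ with the same weights $(t^{1-n}q^m)^{|\mu|}(-t)^{|I|}q^{\binom{|I|}{2}}$ over $|\mu|+|I|=k$. This sum is the same function of $(x,y)$ as $H^k_{n,m}(x,y)1$, merely evaluated at transformed arguments, so it is the constant $c_k$.
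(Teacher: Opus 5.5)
Your proposal is correct and follows the paper's argument: take $\varphi\equiv 1$ in Theorem~\ref{Main thm}, use \cite[equation (36)]{HLNR} to get $E_l=\frac{(t^{n}q^{-m};q)_{l}}{(q;q)_{l}}(t^{1-n}q^{m})^{l}$, and absorb $t^{\beta l}$ into $(q^{m}t^{\beta+1-n})^{l}$. You also justify why substituting $(x,y)\mapsto(tx^{-1},q^{-1}y^{-1})$ preserves the constant eigenvalue: it only reverses the shifts, which still fix constants. The paper leaves this point implicit.
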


The contents of this paper are as follows. 
In Section \ref{RevieMN}, we review \cite{MN97}. 
In Section \ref{Proof of Main Theorem}, we give a proof of Theorem \ref{Main thm}.
In Section \ref{secsum}, we summarize the results and discuss related problems.
In Appendix \ref{Appendix}, we construct an integral transform for the eigenvalue problem of the Noumi--Sano operator $H_{n, 0}(x;u)$.
\section{Review of Mimachi--Noumi's method}\label{RevieMN}
In this paper, we construct an integral transform for deformed Noumi--Sano operators using the method of Mimachi--Noumi \cite{MN97}.
In this section, we give a review of this method. 

Macdonald introduced a commuting family of $q$-difference operators defined by 
\begin{align}\label{MRop}
D_y^r = t^{\binom{r}{2}} \sum_{\substack{I \subset \{1,2, \ldots, m\}\\ |I| = r}} \prod_{i \in I, j \notin I} \frac{t y_i - y_j}{y_i - y_j} T_{q, y}^I, 
\end{align}
for $y = (y_1, \ldots, y_m)$ and $r = 1, 2, \ldots, m$. 
The generating function of these operators is given by 
\begin{align}\label{geneMRop}
D_y(u) = \sum_{r=0}^m(-u)^r D_y^r
\end{align}
The operator $D_y(u)$ satisfies the following important kernel identity.
\begin{thm}[Macdonald \cite{Mac}, Mimachi--Noumi \cite{MN97}, Noumi \cite{N}]\label{KIDforMRop}
We set 
\begin{align}
\Pi_{m,M}(y;w) = \prod_{i=1}^m \prod_{j=1}^M \frac{(t y_i w_j;q)_\infty}{(y_i w_j;q)_\infty}.
\end{align}
Then we have
\begin{align}\label{KIDforMRop_eq}
D_y(u)\Pi_{m,M}(y;w) = (u; t)_{m - M} D_w(t^{m - M}u)\Pi_{m,M}(y;w).
\end{align}
\end{thm}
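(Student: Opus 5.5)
We would prove \eqref{KIDforMRop_eq} by expanding $\Pi_{m,M}(y;w)$ in Macdonald polynomials and comparing eigenvalues term by term. The two ingredients are standard facts from Macdonald's book \cite{Mac}, which we take as known.

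\textbf{Cauchy identity.} For $|y_iw_j|<1$, or as a formal power series in $w$,
\begin{align*}
\Pi_{m,M}(y;w)=\sum_{\lambda} b_\lambda(q,t)\,P_\lambda(y;q,t)\,P_\lambda(w;q,t).
\end{align*}
Here $\lambda$ runs over partitions with $\ell(\lambda)\le \min(m,M)$. Polynomials with more parts than the number of variables vanish, so only these terms survive.

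\textbf{Eigenvalues.} The generating operator \eqref{geneMRop} acts diagonally on this basis:
\begin{align*}
D_y(u)P_\lambda(y)=\prod_{i=1}^m\bigl(1-u\,t^{m-i}q^{\lambda_i}\bigr)P_\lambda(y).
\end{align*}
This follows from the eigenvalue $t^{\binom{r}{2}}e_r(q^{\lambda_1}t^{m-1},\dots,q^{\lambda_m})$ of $D^r_y$.

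\textbf{Reduction.} Apply both sides of \eqref{KIDforMRop_eq} termwise to the Cauchy expansion. Termwise application is justified because $D_y(u)$ and $D_w(\cdot)$ are finite sums of shift operators $T_{q,y}^I$ with rational coefficients, and the series converges uniformly on compacta in the stated domain (alternatively, work in formal power series). The identity then reduces to the following scalar identity for every $\lambda$ with $\ell(\lambda)\le\min(m,M)$:
\begin{align*}
\prod_{i=1}^m\bigl(1-u\,t^{m-i}q^{\lambda_i}\bigr)=(u;t)_{m-M}\prod_{i=1}^M\bigl(1-u\,t^{m-M}t^{M-i}q^{\lambda_i}\bigr).
\end{align*}

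\textbf{Case $m\ge M$.} Since $\lambda_i=0$ for $i>M$, the factors with $M<i\le m$ on the left give $\prod_{i=M+1}^m(1-ut^{m-i})=(u;t)_{m-M}$. The factors with $i\le M$ coincide with the right-hand product, because $t^{m-M}t^{M-i}=t^{m-i}$.

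\textbf{Case $m<M$.} Interpret $(u;t)_{m-M}=1/(ut^{m-M};t)_{M-m}$. Now $\lambda_i=0$ for $i>m$, so the right-hand product splits as
\begin{align*}
\prod_{i=1}^m\bigl(1-ut^{m-i}q^{\lambda_i}\bigr)\cdot\prod_{i=m+1}^{M}\bigl(1-ut^{m-i}\bigr).
\end{align*}
The second product equals $(ut^{m-M};t)_{M-m}$, which cancels the prefactor $(u;t)_{m-M}$.

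\textbf{Main obstacle.} The only delicate point is analytic rather than algebraic: making the termwise application rigorous. The shifts $y\mapsto q^{I}y$ must stay inside the convergence region $|y_iw_j|<1$. Since $|q|<1$, these shifts only shrink $|y_i|$, so this holds automatically.

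If one prefers to avoid citing the Cauchy identity, a fallback is a direct partial-fraction argument. Divide both sides by $\Pi_{m,M}$. Each side becomes a rational function in $w$ (respectively $y$), and one matches residues at $w_k=1/(ty_i)$ together with the behaviour at infinity. This is more computational, so we would use the spectral argument above as the main route.
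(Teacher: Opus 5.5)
Your proof is correct and uses the same spectral strategy as the paper: expand the kernel by a Cauchy identity, then compare the eigenvalues of $D_y(u)$ and $D_w(t^{m-M}u)$ term by term. The difference lies in the input.

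The paper first observes that $T_{q,y_i}\Pi_{m,M}=\Pi_{m,M}\prod_{k=1}^M(1-y_iw_k)/(1-ty_iw_k)$, and similarly in $w$. Hence both sides of the identity, divided by $\Pi_{m,M}$, are rational functions in which $q$ does not occur. The paper then sets $q=t$. There $\Pi_{m,M}=\prod_{i,j}(1-y_iw_j)^{-1}$, and only two classical facts are needed: the Schur Cauchy identity, and the elementary eigenvalues $\prod_{i}(1-ut^{m-i+\lambda_i})$ of $D_y(u)$ on $s_\lambda(y)$.

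You work at general $q$. This avoids the $q$-independence remark but imports heavier machinery for the same conclusion. You need the full $(q,t)$-Cauchy identity with the coefficients $b_\lambda(q,t)$. You also need $P_\lambda(\,\cdot\,;q,t)$ to be defined and to remain an eigenfunction at the given $q,t$, which holds for $|q|,|t|<1$. Your explicit treatment of both cases $m\ge M$ and $m<M$, with $(u;t)_{-k}=1/(ut^{-k};t)_k$, is more complete than the paper's, which writes out only one case.

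One slip to fix: the eigenvalue of $D_y^r$ on $P_\lambda$ is $e_r(q^{\lambda_1}t^{m-1},\ldots,q^{\lambda_m})$, with no extra factor $t^{\binom r2}$. That factor is already part of the definition of $D_y^r$; for example, $D_y^m1=t^{\binom m2}=e_m(t^{m-1},\ldots,1)$. The generating-function eigenvalue $\prod_{i=1}^m(1-ut^{m-i}q^{\lambda_i})$ that you actually use is the correct one, so the argument is unaffected.
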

\begin{proof}
The identity \eqref{KIDforMRop_eq} is independent of $q$. 
Hence it suffices to prove the case $q = t$.
When $q = t$, we obtain the following from Cauchy formula.
\begin{align}\label{proofofKIDforMRop_Pi}
\Pi_{m,M}(y;w) = \prod_{i=1}^m \prod_{j=1}^M \frac{1}{1 - y_i w_j} = \sum_{l (\lambda)\leq \min \{m, M\}} s_{\lambda}(y)s_{\lambda}(w).
\end{align}
Suppose $M \geq m$, we have 
\begin{align}\label{proofofKIDforMRop_s}
&D_y(u)s_\lambda(y) = s_\lambda(y) \prod_{i=1}^m (1 - ut^{M - i + \lambda_i}) \prod_{i=m+1}^M (1 -u t^{M - i}),\notag \\
&D_w(u)s_\lambda(w) = s_\lambda(w) \prod_{i=1}^m (1 -u t^{M - i + \lambda_i}).
\end{align}
By \eqref{proofofKIDforMRop_Pi} and \eqref{proofofKIDforMRop_s}, we obtain the special case $q = t$ of the desired formula. 
This completes the proof. 
\end{proof}
The proof of our main results is based on a method similar to that of Mimachi--Noumi \cite{MN97}.
Therefore, we review Mimachi--Noumi's result and their proof.  
\begin{thm}[Mimachi--Noumi \cite{MN97}]\label{RevieMN_mainthm}
Assume $\alpha \in \mathbb{Z}$. 
Suppose that a function $\varphi(y)$ satisfies the following conditions: 
\begin{itemize}
\item[1.] The function $\varphi(y)$ is an eigenfunction of $D_{y^{-1}}(u/q^\alpha)$, i.e.
\begin{align}\label{assumeeigenMR}
	D_{y^{-1}}(u/q^{\alpha}) \varphi(y) = E \varphi(y).
\end{align}
\item[2.]
The function $\varphi(y)$ is holomorphic on $V = \{y\mid |q| \leq |y_{i}| \leq 1 \  (1 \leq i \leq m) \}$.
\end{itemize}
If $|w_j|<1$, then the function
\begin{align}
\Omega(w) =  \int_{\mathbb{T}^m}  \varphi(y) \omega_{m, M}(y; w)  \frac{dy}{y}
\end{align}
satisfies
\begin{align}\label{Int_trans_MR}
D_w(u) \Omega(w) = (u;t)_{M-m} E \Omega(w), 
\end{align}
where 
\begin{align}
\omega_{m, M}(y; w) =y^\alpha \Pi_{m,M} (y;w) \rho(y),\quad \rho(y) = \prod_{i \neq j}\frac{(y_i/y_j;q)_{\infty}}{(t y_i/y_j;q)_{\infty}}.
\end{align}
\end{thm}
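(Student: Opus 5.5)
The statement to prove is Theorem \ref{RevieMN_mainthm}, the Mimachi--Noumi integral transform. The approach is to move the operator $D_w(u)$ inside the integral, convert it into an operator in $y$ using the kernel identity of Theorem \ref{KIDforMRop}, and then transfer that $y$-operator onto $\varphi$ by a $q$-shift of the integration contour.

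\textbf{Step 1: kernel identity.} Since $|w_j|<1$ and $|y_i|=1$ on $\mathbb{T}^m$, the kernel $\Pi_{m,M}(y;w)$ is holomorphic in $w$ near the relevant region, so $D_w(u)$ may be applied under the integral sign. Theorem \ref{KIDforMRop} is stated as $D_y(u)\Pi=(u;t)_{m-M}D_w(t^{m-M}u)\Pi$. We rewrite it in the needed direction by replacing $u$ with $t^{M-m}u$ and using
\[
(t^{M-m}u;t)_{m-M}=1/(u;t)_{M-m}.
\]
This gives
\[
D_w(u)\Pi_{m,M}(y;w)=(u;t)_{M-m}\,D_y(t^{M-m}u)\Pi_{m,M}(y;w).
\]
The operator $D_y$ shifts $y_i\mapsto qy_i$ on the kernel. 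Hence
\[
D_w(u)\Omega(w)=(u;t)_{M-m}\int_{\mathbb{T}^m}\varphi(y)\,y^\alpha\rho(y)\,\bigl(D_y(t^{M-m}u)\Pi\bigr)\frac{dy}{y}.
\]

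\textbf{Step 2: adjointness.} This is the main step. We must show that, against the weight $y^\alpha\rho(y)\,dy/y$ on $\mathbb{T}^m$, the adjoint of $D_y(v)$ is $D_{y^{-1}}(v')$ for a suitably rescaled parameter $v'$. The plan is as follows.
\begin{enumerate}
\item Expand $D_y(v)$ into its terms $T_{q,y}^I$.
\item In each term, change variables $y_i\mapsto q^{-1}y_i$ for $i\in I$. This moves the torus to a mixed torus with radius $|q|$ in the coordinates $i\in I$.
\item Shift the contour back to $\mathbb{T}^m$. This is legitimate because $\varphi$ is holomorphic on the annulus $V$, $\Pi$ has no poles there when $|w_j|<1$, and $y^\alpha$ is single-valued since $\alpha\in\mathbb{Z}$.
\item Verify the coefficient identity
\[
\frac{T_{q,y}^{-I}\rho}{\rho}\prod_{i\in I,j\notin I}\frac{ty_i-y_j}{y_i-y_j}\Big|_{y\to q^{-I}y}=\prod_{i\in I,j\notin I}\frac{t y_i^{-1}-y_j^{-1}}{y_i^{-1}-y_j^{-1}}\times(\text{power of }t),
\]
together with the factor $q^{-\alpha|I|}$ coming from $y^\alpha$. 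This factor is what converts the parameter into $u/q^\alpha$.
\end{enumerate}

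The apparent poles of $\rho$ need attention. Under the shift, the factors $(y_i/y_j;q)_\infty$ for $i\in I$, $j\notin I$ supply zeros that cancel the poles of the $D$-coefficients. So the integrand remains holomorphic in the strip between the two tori, and the contour deformation is justified. The $t$-power bookkeeping (the $t^{\binom r2}$ and the $t^{M-m}$) should combine so that the resulting operator is exactly $D_{y^{-1}}(u/q^\alpha)$. I expect this bookkeeping to be the main obstacle. The first thing I would try is a direct check in the case $m=2$, $|I|=1$, to fix the normalization.

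\textbf{Step 3: conclusion.} Apply hypothesis \eqref{assumeeigenMR} to get $D_{y^{-1}}(u/q^\alpha)\varphi=E\varphi$. Pulling $E$ out of the integral leaves $(u;t)_{M-m}E\,\Omega(w)$, which is \eqref{Int_trans_MR}.
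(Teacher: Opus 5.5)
Your route is the paper's: kernel identity, then the substitution $y\to q^{-I}y$ with a contour shift, then $T_{q,y}^{-I}(A_I\rho)=A_I(y^{-1})\rho$, then the eigen-equation. Your Step 1 is in fact more accurate than the paper's. Inverting \eqref{KIDforMRop_eq} does give $D_w(u)\Pi_{m,M}=(u;t)_{M-m}D_y(t^{M-m}u)\Pi_{m,M}$, whereas the paper's proof writes $D_y(u)$ at this point.

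The gap is in Step 2, where you expect the $t$-powers to ``combine so that the resulting operator is exactly $D_{y^{-1}}(u/q^\alpha)$''. They cannot. The coefficient identity in your item 4 holds with no extra power of $t$: it is exactly Lemma \ref{adforMRop}, and the factor $t^{\binom{|I|}{2}}$ sits inside $A_I$ on both sides. Together with the factor $q^{-\alpha|I|}$ from $y^\alpha$, the transpose of $D_y(v)$ is therefore exactly $D_{y^{-1}}(v/q^{\alpha})$, and this computation does not involve $M$ at all. With $v=t^{M-m}u$ you end up with
$(u;t)_{M-m}\int_{\mathbb{T}^m}\bigl(D_{y^{-1}}(t^{M-m}u/q^{\alpha})\varphi\bigr)\,\omega_{m,M}\,\frac{dy}{y}$.
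Hypothesis 1 is imposed at the parameter $u/q^{\alpha}$, so it does not apply here.

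Better bookkeeping cannot repair this, because the statement as written fails whenever $M\neq m$. Take $m=1$, $M=2$, $\varphi(y)=y^{-\alpha}$.
\begin{itemize}
\item Then $D_{y^{-1}}(u/q^{\alpha})\varphi=(1-u)\varphi$.
\item $\Omega(w)$ is a nonzero constant, namely the constant term in $y$ of $\Pi_{1,2}(y;w)$.
\item Hence $D_w(u)\Omega=(u;t)_2\,\Omega=(1-u)(1-tu)\Omega$.
\item The theorem instead predicts $(u;t)_1(1-u)\Omega=(1-u)^2\Omega$.
\end{itemize}
The paper reaches the stated form only by dropping the $t^{M-m}$ when it applies \eqref{KIDforMRop_eq}. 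What your argument actually proves is the corrected theorem, in which $\varphi$ is assumed to satisfy $D_{y^{-1}}(t^{M-m}u/q^{\alpha})\varphi=E\varphi$. This coincides with the stated theorem when $M=m$.

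Two smaller points in Step 2 also need fixing.
\begin{itemize}
\item After $y\to q^{-I}y$, the integrand contains $\varphi(q^{-I}y)$ with $|q|\le|y_i|\le 1$ for $i\in I$. The contour shift therefore needs $\varphi$ holomorphic on $\{1\le|y_i|\le|q|^{-1}\}$, not on $V$. This is also the region where $D_{y^{-1}}\varphi$ on $\mathbb{T}^m$ evaluates $\varphi$; the paper's $V$ has the same slip.
\item Besides the $y_i-y_j$ denominators, you must handle the zeros of $(ty_j/y_i;q)_\infty$ in the denominator of $\rho$ at $y_i=ty_j$ ($i\in I$, $j\notin I$). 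These lie in the strip when $|t|\ge|q|$. They are cancelled by the numerator $y_i-ty_j$ of $A_I(y^{-1})$, so you must say so explicitly.
\end{itemize}
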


\begin{rem}
In \cite[Theorem 1]{MN97}, another integral transform in terms of Jackson integral is constructed. 
Here we state only the part relevant to the present paper. 
\end{rem}

The proof of Theorem \ref{RevieMN_mainthm} will be given after we prove several lemmas. 
These lemmas are obtained in \cite{MN97}.

\begin{lemm}\label{adforMRop}
We put 
\begin{align}
D_y(u) = \sum_{I \subset \{ 1, \ldots, m \}} (-u)^{|I|} A_I(y) T_{q,y}^I, \quad 
A_I(y) = t^{r(r-1)/2} \prod_{i \in I, j \notin I} \frac{t y_i - y_j}{y_i - y_j}.
\end{align}
Then we have
\begin{align}
T_{q,y}^{-I}A_I(y) \rho(y) = A_I(y^{-1})\rho(y).
\end{align}
\end{lemm}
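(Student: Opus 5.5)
We need to prove Lemma \ref{adforMRop}: $T_{q,y}^{-I}\bigl(A_I(y)\rho(y)\bigr) = A_I(y^{-1})\rho(y)$. Since $A_I(y)$ is rational and $\rho(y)$ is a ratio of infinite products, the plan is to compute the ratio $T_{q,y}^{-I}\rho(y)/\rho(y)$ explicitly as a rational function. We then check that multiplying it by $T_{q,y}^{-I}A_I(y)$ gives $A_I(y^{-1})$. The shift $T_{q,y}^{-I}$ sends $y_i\mapsto q^{-1}y_i$ for $i\in I$ and fixes the other variables.

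First, split the product $\rho(y)=\prod_{k\neq l}(y_k/y_l;q)_\infty/(ty_k/y_l;q)_\infty$ according to whether $k,l$ lie in $I$ or in $I^c$.
\begin{itemize}
\item If $k,l\in I$ or $k,l\notin I$, the ratio $y_k/y_l$ is unchanged, so these factors cancel in $T_{q,y}^{-I}\rho/\rho$.
\item For $i\in I$, $j\notin I$, the factor with $(k,l)=(i,j)$ has argument $y_i/y_j\mapsto q^{-1}y_i/y_j$. Using $(q^{-1}a;q)_\infty=(1-q^{-1}a)(a;q)_\infty$, it contributes $(1-q^{-1}y_i/y_j)/(1-q^{-1}ty_i/y_j)$.
\item The factor with $(k,l)=(j,i)$ has argument $y_j/y_i\mapsto qy_j/y_i$. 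Using $(qa;q)_\infty=(a;q)_\infty/(1-a)$, it contributes $(1-ty_j/y_i)/(1-y_j/y_i)$.
\end{itemize}
Hence
\[
\frac{T_{q,y}^{-I}\rho(y)}{\rho(y)}=\prod_{i\in I,\,j\notin I}\frac{(1-q^{-1}y_i/y_j)(1-ty_j/y_i)}{(1-q^{-1}ty_i/y_j)(1-y_j/y_i)}.
\]

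Second, shift $A_I$. Writing $r=|I|$, we have
\[
T_{q,y}^{-I}A_I(y)=t^{\binom r2}\prod_{i\in I,\,j\notin I}\frac{tq^{-1}y_i-y_j}{q^{-1}y_i-y_j}.
\]
In the product of the two expressions, the factor $\dfrac{tq^{-1}y_i-y_j}{q^{-1}y_i-y_j}$ cancels exactly against $\dfrac{1-q^{-1}y_i/y_j}{1-q^{-1}ty_i/y_j}$. What remains is
\[
t^{\binom r2}\prod_{i\in I,\,j\notin I}\frac{1-ty_j/y_i}{1-y_j/y_i}=t^{\binom r2}\prod_{i\in I,\,j\notin I}\frac{ty_i^{-1}-y_j^{-1}}{y_i^{-1}-y_j^{-1}},
\]
obtained by multiplying numerator and denominator by $y_j^{-1}$. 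This is exactly $A_I(y^{-1})$, multiplied by $\rho(y)$.

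There is no real obstacle. The only points needing care are the direction of the shift identities for $(a;q)_\infty$ and the convention that the shifted function is the whole product $A_I\rho$, not $T^{-I}$ composed as an operator with the multiplication.
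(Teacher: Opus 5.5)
Your proposal is correct and follows the paper's own proof: compute $T_{q,y}^{-I}\rho(y)/\rho(y)$ as a product over $i\in I$, $j\notin I$, cancel its first factor against $T_{q,y}^{-I}A_I(y)$, and recognize the remainder as $A_I(y^{-1})$. You also keep track of the constant $t^{\binom{r}{2}}$, which the paper drops silently; since it is a constant, this changes nothing.
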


\begin{proof}
By a simple calculation, we have 
\begin{align}
\frac{T_{q,y}^{-I}\rho(y)}{\rho(y)}
=
\prod_{i \in I, j \notin I} 
\frac{1 - y_i/(qy_j)}{1 - t y_i/(qy_j)}
\frac{1 -t y_j/y_i}{1 -  y_j/y_i}.
\end{align}
Hence we obtain
\begin{align}
\frac{T_{q,y}^{-I}A_I(y) \rho(y)}{\rho(y)}
&\notag=
\prod_{i \in I, j \notin I} 
\frac{1 - y_i/(qy_j)}{1 - t y_i/(qy_j)}
\frac{1 - t y_j/y_i}{1 -  y_j/y_i}
\prod_{i \in I, j \notin I} \frac{t y_i/q - y_j}{y_i/q - y_j}\\
&=
\prod_{i \in I, j \notin I} 
\frac{1 -t y_j/y_i}{1 -  y_j/y_i}
=A_I(y^{-1}).
\end{align}
\end{proof}

\begin{lemm}\label{qde-RhamforMR}
Assume $\alpha \in \mathbb{Z}$ and $|t|<|q|$.
Suppose a function $\varphi(y)$ is holomorphic on $V = \{y\mid |q| \leq |y_{i}| \leq 1 \ (1 \leq i \leq m) \}$.
If $|w_i|<1$, then
\begin{align}\label{qde-RhamforMR_eq}
&\notag \int_{\mathbb{T}^m}
  \varphi(y)
  y^{\alpha}
  A_{I}(y)
 \left(
T_{q,y}^I\Pi(y;w)
 \right)
 \rho(y)
  \frac{dy}{y}\\
&  =
  q^{-\alpha |I|}
 \int_{\mathbb{T}^m}
  y^{\alpha}
  \Pi(y;w)
 \rho(y)
  A_{I}(y^{-1})
  \left(
   T_{q,y}^{-I}   \varphi(y)
    \right)
  \frac{dy}{y}.
\end{align}
\end{lemm}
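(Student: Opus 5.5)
The strategy is a $q$-analogue of integration by parts (a $q$-de Rham argument): move the shift operator $T_{q,y}^I$ from the kernel $\Pi(y;w)$ onto $\varphi(y)$, so that the coefficients recombine via Lemma \ref{adforMRop}. I treat one subset $I\subset\{1,\ldots,m\}$ at a time. Write $T_{q,y}^I$ for the shift $y_i\mapsto qy_i$ ($i\in I$).

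First, rewrite the integrand on the left. Set $F(y)=\varphi(q^{-I}y)\,(q^{-I}y)^{\alpha}\,\Pi(y;w)\,\rho(q^{-I}y)\,A_I(q^{-I}y)$, where $q^{-I}y$ means dividing $y_i$ by $q$ for $i\in I$.

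1. I claim that $T_{q,y}^I F$ equals the left integrand. The factor $\varphi\, y^\alpha \rho A_I$ is restored, and $\Pi(y;w)$ becomes $T^I_{q,y}\Pi$.
2. Since $\alpha\in\mathbb{Z}$, we have $(q^{-I}y)^\alpha=q^{-\alpha|I|}y^\alpha$.
3. By Lemma \ref{adforMRop}, $\rho(q^{-I}y)A_I(q^{-I}y)=(T^{-I}_{q,y}A_I\rho)(y)=A_I(y^{-1})\rho(y)$.

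Hence $F(y)=q^{-\alpha|I|}y^\alpha\Pi(y;w)\rho(y)A_I(y^{-1})\,(T_{q,y}^{-I}\varphi)(y)$, which is exactly the right integrand.

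It therefore suffices to show
\begin{align*}
\int_{\mathbb{T}^m}(T^I_{q,y}F)(y)\,\frac{dy}{y}=\int_{\mathbb{T}^m}F(y)\,\frac{dy}{y}.
\end{align*}
The left side is the integral of $F$ over the torus where $|y_i|=|q|$ for $i\in I$ and $|y_j|=1$ otherwise, because $dy/y$ is invariant under scaling. So the identity follows from Cauchy's theorem, provided $F$ is holomorphic in each $y_i$ ($i\in I$) on the annulus $|q|\le|y_i|\le1$, with the other variables on $|y_j|=1$. The deformation can be done one variable at a time.

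The holomorphy check is the main obstacle. It splits into three parts.

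1. $\varphi(q^{-I}y)$ needs $|q|\le|y_i/q|$, and also $|y_i/q|\le 1$, which fails on the annulus. So I would instead work with the left integrand $T^I_{q,y}F$ directly, deforming contours from radius $1$ inward to radius $|q|$ in the $y_i$, $i\in I$. In that picture $\varphi(y)$ is evaluated on $V$, which is the hypothesis.
2. $\Pi(qy;w)$ has poles at $y_iw_j=q^{-k-1}$, $k\ge0$. These lie outside the unit disk because $|w_j|<1$, so they cause no trouble.
3. The delicate factor is $A_I(y)\rho(y)$. The product $\rho$ has poles at $y_i/y_j=t^{-1}q^{-k}$ and zeros at $y_i/y_j=q^{-k}$; with $|y_j|=1$ and $|q|\le|y_i|\le1$, the pole at $|y_i/y_j|=|t|^{-1}|q|^{-k}$ is avoided because $|t|<|q|$ pushes it outside the region. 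The apparent poles $y_i=y_j$ of $A_I$ are cancelled by the zeros of $(y_i/y_j;q)_\infty(y_j/y_i;q)_\infty$. Similarly, the factors $(ty_i/y_j;q)_\infty$ with $i\notin I$, $j\in I$ vanish only at $|y_j|=|t||q|^{k}<|q|$, outside the annulus.

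For several $i\in I$ the ratios $y_i/y_{i'}$ with both indices in $I$ are unchanged by the simultaneous shift. So I would deform all $y_i$, $i\in I$, together along $y_i=s\,\zeta_i$ with $s\in[|q|,1]$ and $|\zeta_i|=1$; then $A_I\rho$ only involves the cross ratios, which are handled above.

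Combining the pointwise identity $T^I_{q,y}F=\text{LHS integrand}$ with the contour shift gives \eqref{qde-RhamforMR_eq}.
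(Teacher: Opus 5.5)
Your algebraic reduction is the paper's own argument. The pointwise identity $T^I_{q,y}F=$ (left integrand), obtained from $\alpha\in\mathbb{Z}$ and Lemma~\ref{adforMRop}, is exactly its change of variables $y\to q^{-I}y$, and what remains is the same Cauchy contour shift. The gap is in item~1 of your holomorphy check.

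Set $g=T^I_{q,y}F$, the left integrand. Deforming $g$ from $|y_i|=1$ inward to $|y_i|=|q|$ ($i\in I$) only proves $\int_{\mathbb{T}^m}g\,\frac{dy}{y}=\int_{\mathbb{T}^m}(T^I_{q,y}g)\,\frac{dy}{y}$. That moves the shift in the wrong direction. Since $\int_{\mathbb{T}^m}F\,\frac{dy}{y}=\int_{q^{-I}\mathbb{T}^m}g\,\frac{dy}{y}$, you need an \emph{outward} shift of $g$ to $|y_i|=|q|^{-1}$. Equivalently, you need to move $F$ from $|y_i|=|q|$ to $|y_i|=1$, as you correctly said just before item~1. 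That requires $\varphi$ to be holomorphic on $1\le|y_i|\le|q|^{-1}$, which lies outside $V$.

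No choice of contour avoids this. For $y\in\mathbb{T}^m$, the right-hand side of the statement already evaluates $\varphi(q^{-I}y)$ at $|y_i|=|q|^{-1}$, so under the stated hypothesis it is not even defined.

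The paper's proof has the same blind spot. It asserts that the integrand containing $T^{-I}_{q,y}\varphi$ is holomorphic on $V$, but lists only the poles of $\Pi$, $\rho$ and $A_I(y^{-1})$. The repair is to change the hypothesis, not the direction of deformation. Assume $\varphi$ is holomorphic on $q^{-1}V=\{1\le|y_i|\le|q|^{-1}\}$; equivalently, $T^{-I}_{q,y}\varphi$ is holomorphic on $V$. A neighbourhood of $\{1\le|y_i|\le|q|^{-1}\ (i\in I),\ |y_j|=1\ (j\notin I)\}$ suffices.

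With that hypothesis, your items~2 and~3 go through on the correct annulus:
\begin{itemize}
\item The poles $y_i=q^{-k-1}/w_j$ of $\Pi(q^Iy;w)$ have modulus greater than $|q|^{-1}$, because $|w_j|<1$.
\item The poles $y_a=tq^ky_b$ of $\rho$ are excluded, because there $|y_a/y_b|\ge|q|>|t|$.
\item The singularities $y_i=y_j$ of $A_I$ are cancelled by the zeros of $\rho$.
\end{itemize}
The argument then closes exactly as in the paper.
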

\begin{proof}
By the change of variables $y\to q^{-I}y$, we have
\begin{align}
&\notag\int_{\mathbb{T}^m}
 \varphi(y)
  y^{\alpha}
  A_{I}(y)
 \left(
T_{q,y}^I\Pi(y;w)
 \right)
 \rho(y)
  \frac{dy}{y}
  \\
  &=
  q^{-\alpha |I|}
 \int_{q^{I} \mathbb{T}^m}
  y^{\alpha}
  \left(
  T_{q,y}^{-I}  A_{I}(y) \rho(y) \varphi(y)
    \right)
    \Pi(y;w)
  \frac{dy}{y}.
\end{align}
By Lemma \ref{adforMRop}, we get 
\begin{align}\label{Lem_MR_deRham_proof_eq}
&\notag
\int_{q^{I} \mathbb{T}^m}
  y^{\alpha}
  \left(
  T_{q,y}^{-I}  A_{I}(y) \rho(y) \varphi(y)
    \right)
    \Pi(y;w)
  \frac{dy}{y}
\\=
&
 \int_{q^{I} \mathbb{T}^m}
  y^{\alpha}
  \left(
  T_{q,y}^{-I}  \varphi(y)
    \right)
    A_I(y^{-1})\rho(y)
    \Pi(y;w)
  \frac{dy}{y}.
\end{align}
The poles of the integrand \eqref{Lem_MR_deRham_proof_eq} on the right-hand side for $y \in (\mathbb{C}\setminus \{0\})^m$ are as follows: 
\begin{align}
&\notag y_i - q^{-k}/w_j = 0 \quad \left(k \in \mathbb{Z}_{\geq 0},\ 1 \leq i \leq m,\ 1 \leq j \leq M \right), \\
&y_a - t q^{k} y_b = 0 \quad \left(k \in \mathbb{Z}_{\geq 0},\ 1 \leq a, b \leq m \right). 
\end{align}
From assumptions $|w_{j}|<1$ and $|t| < |q|$, the integrand \eqref{Lem_MR_deRham_proof_eq} on the right-hand side is holomorphic on $V$.
Therefore, we obtain
\begin{align}
&\notag \int_{q^{I} \mathbb{T}^m}
  y^{\alpha}
  \left(
  T_{q,y}^{-I}  \varphi(y)
    \right)
    A_I(y^{-1})\rho(y)
    \Pi(y;w)
  \frac{dy}{y}\\
&  =\int_{\mathbb{T}^m}
  y^{\alpha}
  \left(
  T_{q,y}^{-I}  \varphi(y)
    \right)
    A_I(y^{-1})\rho(y)
    \Pi(y;w)
  \frac{dy}{y}
\end{align}
From this equation, we obtain the desired equation \eqref{qde-RhamforMR_eq}.
\end{proof}

We give the proof of Theorem \ref{RevieMN_mainthm}, following \cite{MN97}.
\begin{proof}[Proof of Theorem \ref{RevieMN_mainthm}.]
By the kernel identity \eqref{KIDforMRop_eq}, we get
\begin{align}
D_w(u) \Omega(w) 
&\notag=
 \int_{\mathbb{T}^m}
  \varphi(y)
  y^{\alpha}
 \left(
 D_w(u)\Pi(y;w)
 \right)
 \rho(y)
  \frac{dy}{y}
  \\
  &=
 (u;t)_{M-m} 
 \int_{\mathbb{T}^m}
  \varphi(y)
  y^{\alpha}
 \left(
 D_y(u)\Pi(y;w)
 \right)
 \rho(y)
  \frac{dy}{y}.
 \end{align}
 By Lemma \ref{qde-RhamforMR}, we have 
 \begin{align}
 &\notag
 \int_{\mathbb{T}^m}
  \varphi(y)
  y^{\alpha}
 \left(
 D_y(u)\Pi(y;w)
 \right)
 \rho(y)
  \frac{dy}{y}\\
&\notag  =
  \sum_{I \subset \{1, \ldots, m \}}
 \int_{\mathbb{T}^m}(-u)^{|I|}
  \varphi(y)
  y^{\alpha}
  A_{I}(y)
 \left(
T_{q,y}^I\Pi(y;w)
 \right)
 \rho(y)
  \frac{dy}{y}
  \\
  &=
   \sum_{I \subset \{1, \ldots, m \}}
 \int_{\mathbb{T}^m}
  \varphi(y)
  y^{\alpha}
  \Pi(y;w)
 \rho(y)
 (u/q^{\alpha})^{|I|}
  A_{I}(y^{-1})
  \left(
   T_{q,y}^{-I}   \varphi(y)
    \right)
  \frac{dy}{y}.
 \end{align}
 By assumption \eqref{assumeeigenMR}, we get
 \begin{align}
\notag
& \sum_{I \subset \{1, \ldots, m \}}
 \int_{\mathbb{T}^m}
  \varphi(y)
  y^{\alpha}
  \Pi(y;w)
 \rho(y)
 (u/q^{\alpha})^{|I|}
  A_{I}(y^{-1})
  \left(
   T_{q,y}^{-I}   \varphi(y)
    \right)
  \frac{dy}{y}
\\
& =
\int_{\mathbb{T}^m}
\left(
D_{y^{-1}}(u/q^{\alpha}) \varphi(y)
\right)
\omega_{m, M}(y; w) 
 \frac{dy}{y}
 =E
 \int_{\mathbb{T}^m}
 \varphi(y)
\omega_{m, M}(y; w) 
 \frac{dy}{y}.
\end{align}
Hence we have the desired equation \eqref{Int_trans_MR}.
\end{proof}
\section{Integral transform for eigenproblem of deformed Noumi--Sano operator}\label{Proof of Main Theorem}
In this section, we construct an integral transform for deformed Noumi--Sano operators using the method of Mimachi--Noumi \cite{MN97}.

The deformed Noumi--Sano operator $H_{n, m}(x, y; u)$ satisfies the following kernel identity. 
\begin{thm}[Halln\"as--Langmann--Noumi--Rosengren \cite{HLNR}]\label{KIDforDNS}
The following identity holds. 
 \begin{align}\label{KIDforDNSeq}
&\notag H_{N, M}(z, w; u)\Psi_{n,m;N,M}(x,y;z,w) \\
&= \frac{(t^{1-n}q^m u; q)_{\infty}}{(t^{1-N}q^M u; q)_{\infty}}H_{n, m}(x, y; u)\Psi_{n,m;N,M}(x,y;z,w).
 \end{align} 
 Here $\Psi_{n,m;N,M}(x,y;z,w)$ is given in \eqref{KFforDNS}.
 In particular, we have
 \begin{align}\label{KID rank d}
& H_{N, M}^d(z, w)\Psi_{n,m;N,M}(x,y;z,w) \notag \\
&=
\sum_{k+l = d}\frac{(t^{N - n} q^{m - M}; q)_k}{(q; q)_k} (t^{1-N}q^M)^k H_{n, m}^l(x, y)\Psi_{n,m;N,M}(x,y;z,w).
\end{align} 
 \end{thm}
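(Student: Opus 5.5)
The identity \eqref{KIDforDNSeq} is the kernel function identity of Halln\"as--Langmann--Noumi--Rosengren \cite[Theorem 2.1, Remark 2.2]{HLNR}, and I will take it as given. What remains is to deduce the rank-$d$ identity \eqref{KID rank d} from it. The plan is to expand both sides of \eqref{KIDforDNSeq} as power series in $u$ and compare the coefficients of $u^d$.

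\textbf{Step 1 (expanding the scalar factor).} Put $a=t^{1-n}q^m$ and $b=t^{1-N}q^M$. By the $q$-binomial theorem,
\begin{align*}
\frac{(au;q)_\infty}{(bu;q)_\infty}=\sum_{k=0}^\infty \frac{(a/b;q)_k}{(q;q)_k}(bu)^k ,
\end{align*}
which converges for $|bu|<1$. Here $a/b=t^{N-n}q^{m-M}$, so the coefficient of $u^k$ is exactly $\frac{(t^{N-n}q^{m-M};q)_k}{(q;q)_k}(t^{1-N}q^M)^k$.

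\textbf{Step 2 (expanding the operators).} By definition $H_{N,M}(z,w;u)=\sum_{d\ge0}u^dH^d_{N,M}(z,w)$, and likewise $H_{n,m}(x,y;u)=\sum_{l\ge0}u^lH^l_{n,m}(x,y)$. Each $H^d$ is a finite sum: it collects the terms of \eqref{DNSop} with $|\mu|+|I|=d$. Hence $H^d_{N,M}\Psi$ and $H^l_{n,m}\Psi$ are well-defined functions on any region where $\Psi$ and its finitely many shifts are defined and the coefficients $B_{\mu,I}$ have no poles.

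\textbf{Step 3 (comparing coefficients).} Multiply the series from Step 1 by $\sum_l u^lH^l_{n,m}\Psi$. By the Cauchy product, the coefficient of $u^d$ is
\begin{align*}
\sum_{k+l=d}\frac{(t^{N-n}q^{m-M};q)_k}{(q;q)_k}(t^{1-N}q^M)^kH^l_{n,m}(x,y)\Psi_{n,m;N,M}(x,y;z,w).
\end{align*}
Equating this with the coefficient $H^d_{N,M}(z,w)\Psi_{n,m;N,M}(x,y;z,w)$ on the left-hand side gives \eqref{KID rank d}.

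\textbf{Main obstacle: justifying the coefficient comparison.} The simplest route is to read \eqref{KIDforDNSeq} as an identity of formal power series in $u$, whose coefficients are functions of $(x,y,z,w)$. This is how HLNR state it: the operator is defined as $\sum_d u^dH^d$, and the scalar factor expands formally as in Step 1. In that reading, comparing coefficients is immediate.

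If one instead wants an analytic statement, I would argue as follows. Fix generic $(x,y,z,w)$ and check that both sides converge for small $|u|$, so that both are holomorphic in $u$ near $0$; uniqueness of Taylor coefficients then gives the result. The convergence check needs:
\begin{itemize}
\item a bound on $B_{\mu,I}$ that is at most geometric in $|\mu|$; this holds because the ratios $(tx_i/x_j;q)_{\mu_i}/(qx_i/x_j;q)_{\mu_i}$ are bounded for generic $x$;
\item the boundedness of $T^\mu_{q,x}\Psi$ as $|\mu|\to\infty$, since $\Psi$ is analytic near $x=0$.
\end{itemize}
Since the formal reading already suffices, I would present that argument and only remark on the analytic one.
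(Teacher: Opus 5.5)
Your argument is the paper's: the paper also takes the generating-function identity from Halln\"as--Langmann--Noumi--Rosengren, via Kajihara's transformation formula, without proof, and the rank-$d$ identity is exactly the coefficient of $u^d$ obtained from the $q$-binomial expansion $\frac{(t^{1-n}q^m u;q)_\infty}{(t^{1-N}q^M u;q)_\infty}=\sum_{k\geq 0} \frac{(t^{N-n}q^{m-M};q)_k}{(q;q)_k}(t^{1-N}q^M u)^k$ that you use. The one caveat concerns your citation: the paper remarks that the kernel function printed in HLNR differs from $\Psi_{n,m;N,M}$ here and, as written, does not satisfy the identity, so what you are really taking as given is the identity for the slightly modified kernel, which the paper likewise asserts without verification.
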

The identity \eqref{KIDforDNSeq} follows from Kajihara's transformation formula \cite{K04}.
We omit the proof.
For details, see \cite{HLNR}.
We note that $\Psi$ in \cite{HLNR} is defined as 
\begin{align}
\prod_{i=1}^n\prod_{j=1}^N\frac{(t x_iz_j;q)_\infty}{(x_iz_j;q)_\infty}\prod_{i=1}^m\prod_{j=1}^M \frac{(q t y_iw_j;t)_\infty}{(t y_iw_j;t)_\infty}\prod_{i=1}^n\prod_{j=1}^M (1- t x_iw_j)\prod_{i=1}^m\prod_{j=1}^N(1- t y_iz_j).
\end{align}
However, this $\Psi$ does not satisfy the kernel identity \eqref{KIDforDNSeq}.
We slightly modify $\Psi$ so that it satisfies the kernel identity.

To prove Theorem \ref{Main thm}, we prove several lemmas.
\begin{lemm}\label{Lemm adjoint of B}
	\begin{align}\label{adjoint of B}
		T_{q,x}^{-\mu}T_{t,y}^{I} (B_{\mu,I}(x,y)\rho(x, y))=B_{\mu,I}(t x^{-1},q^{-1}y^{-1})\rho(x, y).
	\end{align}

\end{lemm}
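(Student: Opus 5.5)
The plan is to follow the proof of Lemma~\ref{adforMRop}: compute the ratio $T_{q,x}^{-\mu}T_{t,y}^{I}\rho(x,y)/\rho(x,y)$ explicitly, multiply it by $T_{q,x}^{-\mu}T_{t,y}^{I}B_{\mu,I}(x,y)$, and check that the product collapses to $B_{\mu,I}(tx^{-1},q^{-1}y^{-1})$. Both sides factor into products, so I will treat separately the pieces depending only on $x$, only on $y$, and on the mixed pairs $(x_i,y_j)$. Shifting and then dividing keeps the factorization, so the identity reduces to three independent rational identities.

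For the $x$-part, the ratio of the $x$-factor of $\rho$ under $x\mapsto q^{-\mu}x$ is a product over $i\neq j$ of finite $q$-Pochhammer ratios, namely $(x_i/x_j;q)_{\infty}/(q^{\mu_j-\mu_i}x_i/x_j;q)_\infty$ and the analogous ratio with $t$. I will combine the $i\neq j$ terms into ordered pairs and use $(a;q)_{k}$ manipulations such as $(aq^{-k};q)_k=(-a)^k q^{-\binom{k+1}{2}}(q/a;q)_k$. The target is the $x$-part of $B_{\mu,0}$ evaluated at $tx^{-1}$, i.e.\ $\frac{\Delta(q^{-\mu}x^{-1})}{\Delta(x^{-1})}\prod_{i,j}\frac{(x_j/x_i;q)_{\mu_i}}{(qx_j/(tx_i);q)_{\mu_i}}$. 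This is exactly the adjointness computation for the Noumi--Sano (or Macdonald in $x$) operator with respect to $\rho$. The $y$-part is Lemma~\ref{adforMRop} with $(q,t)$ interchanged, acting on $y$ with $T_{t,y}^{+I}$ instead of $T_{q,y}^{-I}$. Here the factor $\prod_{i\in I,j\notin I}(y_i-qy_j)/(y_i-y_j)$ should map to the same expression in $q^{-1}y^{-1}$; the scaling $q^{-1}$ drops out of these homogeneous ratios, so this follows by the same computation.

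The mixed part is where genuinely new cancellations occur. Under the shifts, the denominator $\prod_{i,j}(1-q^{-1}x_i/y_j)(1-ty_j/x_i)$ of $\rho$ becomes $(1-q^{-1-\mu_i}t^{-\epsilon_j}x_i/y_j)(1-t^{1+\epsilon_j}q^{\mu_i}y_j/x_i)$ with $\epsilon_j=[j\in I]$. The mixed factor of $B$ is then shifted in the same way. I will check case by case, $j\in I$ and $j\notin I$, that the resulting product equals
\[
\prod_{j\in I}\frac{1-y_j/(qx_i)\cdot q\, t\cdots}{\cdots}
\]
that is, the mixed factor of $B$ with $x_i\to t/x_i$ and $y_j\to 1/(qy_j)$, giving $(1-qy_j/(tx_i)\cdot t/q\ldots)$. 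Concretely, for $j\notin I$ the factor $(1-x_i/(qy_j))/(1-q^{\mu_i-1}x_i/y_j)$ of $B$ becomes $(1-q^{-\mu_i-1}x_i/y_j)/(1-q^{-1}x_i/y_j)$ after the shift. Multiplied by the $\rho$-ratio, this should telescope to $(1-ty_j/x_i)/(1-tq^{\mu_i}y_j/x_i)$, matching the target $(1-x_i'/(qy_j'))/(1-q^{\mu_i-1}x_i'/y_j')$ at $x'=t/x$, $y'=1/(qy)$. The $j\in I$ case works the same way, with an extra factor of $t$.

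The main obstacle is bookkeeping rather than ideas. One must track the powers of $q$ and $t$ and the sign factors produced when inverting the arguments of Pochhammer symbols in the $x$-part, and make sure they cancel exactly and leave no stray monomial $x^{\cdot}$ or constant. The substitution $x\to tx^{-1}$ is what is designed to absorb the $t$'s, so I would first verify the case $n=1,m=1$ in full and then the pairwise $x$-cancellations before writing the general product.
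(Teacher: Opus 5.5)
Your plan is the paper's proof. The paper writes $B_{\mu,I}\rho=(F_0G_0)(F_1G_1)(F_2G_2)$ (mixed, $x$-only and $y$-only factors) and verifies the three shifted identities separately by direct computation. Your $y$-part reduction is correct: it is Lemma~\ref{adforMRop} with $q\leftrightarrow t$ together with $y\to y^{-1}$, which turns $T_{t,y}^{-I}$ into $T_{t,y}^{I}$, and the $q^{-1}$ drops out by homogeneity. Your $j\notin I$ mixed computation is also correct.

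The step that fails as written is the $x$-part, because the explicit target you display is not the $x$-factor $F_1$ of $B_{\mu,I}$ evaluated at $tx^{-1}$. Since $x_i\mapsto t/x_i$ sends $x_i/x_j\mapsto x_j/x_i$ (the $t$ cancels in every ratio), the correct target is
\[
F_1(tx^{-1})=\frac{\Delta(q^{\mu}x^{-1})}{\Delta(x^{-1})}\prod_{i,j=1}^n\frac{(tx_j/x_i;q)_{\mu_i}}{(qx_j/x_i;q)_{\mu_i}} .
\]
Your expression has $q^{-\mu}$ in the Vandermonde ratio and diagonal factors $(1;q)_{\mu_i}=0$, so it vanishes for every $\mu\neq 0$. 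For instance, with $n=2$, $\mu=(1,0)$ and $a=x_1/x_2$, one gets $T_{q,x}^{-\mu}(F_1G_1)/G_1=\frac{(1-t)(1-t/a)}{(1-q)(1-1/a)}\neq 0$, where $G_1$ is the $x$-factor of $\rho$. Also, $T_{q,x}^{-\mu}G_1/G_1$ contains $(q^{\mu_j-\mu_i}x_i/x_j;q)_\infty/(x_i/x_j;q)_\infty$, which is the reciprocal of what you wrote.

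With the correct target, the $x$-part reduces to $n=2$. Everything factors over unordered pairs $\{i,j\}$, and the diagonal constants $(t;q)_{\mu_i}/(q;q)_{\mu_i}$ are unaffected. Writing $(c;q)_{\mu}=(c;q)_\infty/(cq^{\mu};q)_\infty$ then makes it a one-line check in $a=x_i/x_j$.

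Likewise, the $j\in I$ mixed case is not ``the same with an extra factor of $t$''. With $u=x_i/y_j$, the shifted-and-divided factor is
\[
\frac{(1-q^{-\mu_i}t^{-2}u)(1-q^{-1}u)(1-t/u)}{(1-t^{-1}u)(1-q^{-1-\mu_i}t^{-1}u)(1-t^{2}q^{\mu_i}/u)} .
\]
Its factors cancel only up to the monomials $-u/(t^2q^{\mu_i})$ and $-t/u$, and rewriting the remaining ratio produces a factor $tq^{\mu_i}$. These monomials cancel, leaving $(1-qy_j/x_i)/(1-q^{\mu_i+1}ty_j/x_i)$, which is exactly the $j\in I$ factor of $F_0(tx^{-1},q^{-1}y^{-1})$. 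So the $t$ in $tx^{-1}$ and the $q^{-1}$ in $q^{-1}y^{-1}$ matter only in the mixed part. Once the $x$-part target is corrected, your argument goes through as in the paper.
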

\begin{proof}
We put
\begin{align}
&F_0(x, y)=\prod_{i=1}^n \left(\prod_{j\in I}\frac{1-x_i/(ty_j)}{1-q^{\mu_i}x_i/y_j}\prod_{j\notin I}\frac{1-x_i/(q y_j)}{1-q^{\mu_i-1}x_i/y_j}\right), \\
&G_0(x, y)=\prod_{i=1}^{n}\prod_{j=1}^{m} \frac{1}{(1- q^{-1}x_i/y_j)(1 - t y_j/x_i)}, \\
&F_1(x)=\frac{\Delta(q^\mu x)}{\Delta(x)}\prod_{i,j=1}^n\frac{(tx_i/x_j;q)_{\mu_i}}{(qx_i/x_j;q)_{\mu_i}}, \\
&G_1(x)=\prod_{1 \leq i \neq j \leq n}\frac{(x_i/x_j; q)_{\infty}}{(t x_i/x_j; q)_{\infty}}, \\
&F_2(y)=\prod_{\substack{1\leq i,j\leq m\\i\in I,j\notin I}}\frac{y_i-qy_j}{y_i-y_j}, \\
&G_2(y)=\prod_{1 \leq i \neq j \leq m}\frac{(y_i/y_j; t)_{\infty}}{(q y_i/y_j; t)_{\infty}}
.
\end{align}
By simple calculation, we have 
\begin{align}
&T_{q,x}^{-\mu}T_{t,y}^{I} (F_0(x, y) G_0(x, y)) = F_0(t/x, 1/(q y)) G_0(x, y), \\
&T_{q,x}^{-\mu}(F_1(x) G_1(x)) = F_1(t/x) G_1(x), \\
&T_{t,y}^{I} (F_2(y) G_2(y)) = F_2(1/(qy)) G_2(y).  
\end{align}
Since $B_{\mu,I}(x,y)\rho(x, y) = F_0(x, y) G_0(x, y)F_1(x) G_1(x)F_2(y) G_2(y)$, we obtain the desired identity \eqref{adjoint of B}. 
\end{proof}
 \begin{rem}
The case $|I|=|\mu|=1$ of Lemma \ref{Lemm adjoint of B} was obtained in \cite{AHL21}.
\end{rem}

\begin{lemm}\label{int to int term mu I}
Assume $r < |q|^{\max_{i} {|\mu_i|}} R$ and $|q| < |t|$, where $r, R \in \mathbb{R}_{> 0}$.
Suppose a function $\varphi(x, y)$ is holomorphic on $D = \{(x, y)\mid |q|^{\mu_i}R \leq |x_{i}| \leq R,\, r \leq |y_{j}| \leq |t|^{-1} r \, (1 \leq i \leq n, \, 1 \leq j \leq m) \}$. 
If $|z_{j}|<|t|/R , \, |w_{j}|<1/r$, then we have
	\begin{align}\label{int to int term mu I eq}
	&\int_{C_\mu} x^{\alpha} y^{\beta} \varphi (x, y) \rho(x,y) B_{\mu, I}(x, y) 
	\left(
	T_{q, x}^{\mu} T_{t, y}^{-I} \Psi_{n,m;N,M}(x,y;z,w)
	\right)
	\frac{dx}{x}\frac{dy}{y} \notag \\
	=
	&\notag q^{-\alpha|\mu|} t^{\beta|I|}
	 \int_{C_\mu}
	 \bigg(
	  x^{\alpha} y^{\beta}
	  \left(
	T_{q, x}^{-\mu} T_{t, y}^{I} \varphi (x, y)
	\right)
	\rho(x,y) B_{\mu, I}(t x^{-1}, q^{-1} y^{-1})\\
	&\quad\quad\quad\quad\quad\quad\quad\quad\quad\quad\quad\quad\quad\quad \times \Psi_{n,m;N,M}(x,y;z,w) 
	\bigg)
	\frac{dx}{x}\frac{dy}{y}, 
	\end{align}
where  $C_{\mu} = \mathbb{T}^n_{R} \times \mathbb{T}^m_{r}$ and $\alpha, \beta \in \mathbb{Z}$.
\end{lemm}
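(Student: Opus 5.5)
We follow the proof of Lemma \ref{qde-RhamforMR}: a change of variables, then the adjoint identity of Lemma \ref{Lemm adjoint of B}, then a contour shift.

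\textbf{Step 1 (change of variables).} In the left-hand side of \eqref{int to int term mu I eq} substitute $x_i\to q^{-\mu_i}x_i$ and $y_j\to t y_j$ for $j\in I$. This substitution turns $T_{q,x}^{\mu}T_{t,y}^{-I}\Psi$ into $\Psi$ itself. The factor $x^\alpha y^\beta$ produces $q^{-\alpha|\mu|}t^{\beta|I|}$, since $\alpha,\beta\in\mathbb{Z}$ and the monomials are single-valued. The measure $dx/x\,dy/y$ is invariant. The remaining factor becomes $T_{q,x}^{-\mu}T_{t,y}^{I}\bigl(\varphi\,\rho\,B_{\mu,I}\bigr)$, and the contour becomes $q^{\mu}\mathbb{T}^n_R\times t^{-I}\mathbb{T}^m_r$, that is, $|x_i|=|q|^{\mu_i}R$, $|y_j|=|t|^{-1}r$ for $j\in I$, and $|y_j|=r$ otherwise.

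\textbf{Step 2 (adjoint identity).} Split the shifted product as $\bigl(T_{q,x}^{-\mu}T_{t,y}^{I}\varphi\bigr)\cdot T_{q,x}^{-\mu}T_{t,y}^{I}(B_{\mu,I}\rho)$. Lemma \ref{Lemm adjoint of B} rewrites the second factor as $B_{\mu,I}(tx^{-1},q^{-1}y^{-1})\rho(x,y)$. This gives the integrand on the right-hand side of \eqref{int to int term mu I eq}, except that it is still integrated over the shifted torus.

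\textbf{Step 3 (moving the contour back to $C_\mu$).} This is the main obstacle. We must show that
\[
\bigl(T_{q,x}^{-\mu}T_{t,y}^{I}\varphi\bigr)\,\rho(x,y)\,B_{\mu,I}(tx^{-1},q^{-1}y^{-1})\,\Psi(x,y;z,w)\,x^{\alpha}y^{\beta}
\]
is holomorphic on the annular region swept out between the two tori. In each coordinate this region is $|q|^{\mu_i}R\le|x_i|\le R$ and $r\le|y_j|\le|t|^{-1}r$. Cauchy's theorem, applied one variable at a time, then identifies the two integrals. The sources of possible poles are as follows.

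\begin{itemize}
\item[(a)] \emph{The $\varphi$ factor.} $T_{q,x}^{-\mu}T_{t,y}^{I}\varphi$ is evaluated at $(q^{-\mu}x, t^{I}y)$. As $(x,y)$ runs over the region, this point lies in $D$, where $\varphi$ is holomorphic by hypothesis. The $y$-conditions need checking; some care with the direction of the shift may be required.
\item[(b)] \emph{The $x$–$z$ and $y$–$w$ factors of $\Psi$.} Poles occur at $x_i z_j = t q^{-k}$, which are excluded because $|x_iz_j|<|t|$ when $|x_i|\le R$ and $|z_j|<|t|/R$. Poles also occur at $t y_i w_j=t^{-k}$, i.e.\ $y_iw_j=t^{-k-1}$. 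These are excluded because $|y_iw_j|<|t|^{-1}$ when $|y_i|\le|t|^{-1}r$ and $|w_j|<1/r$.
\item[(c)] \emph{The factor $\rho$ combined with $B_{\mu,I}(tx^{-1},q^{-1}y^{-1})$.} Several cancellations are needed here. The denominators $(tx_i/x_j;q)_\infty$ give poles at $x_i/x_j=t^{-1}q^{-k}$. The ratio $|x_i/x_j|$ in the region is bounded by $|q|^{-\max\mu}$, so we use $|q|<|t|$ and otherwise check that these poles are cancelled by the numerators of $B$ after the shift. The same holds for the $y$-factors $(qy_i/y_j;t)_\infty$, using $|y_i/y_j|\le|t|^{-1}$.
\item[(d)] \emph{The mixed denominator.} The factor $(1-q^{-1}x_i/y_j)(1-ty_j/x_i)$ together with the denominators $1-q^{\mu_i}x_i/y_j$ of $B$ evaluated at $(t/x,1/(qy))$ has zeros at $x_i/y_j$ equal to $q$ times a power of $q$, or to $t$ times a power of $q$. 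These are excluded by $r<|q|^{\max\mu_i}R$, which forces $|x_i/y_j|\ge|q|^{\mu_i}R\,|t|/r>|t|$, together with $|q|<|t|$.
\end{itemize}

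I expect the delicate part to be verifying that every apparent pole from $\rho$ and $B_{\mu,I}(tx^{-1},q^{-1}y^{-1})$ inside the annuli either cancels or lies outside, using the explicit form from the proof of Lemma \ref{Lemm adjoint of B}, namely $F_0(t/x,1/(qy))G_0$, $F_1(t/x)G_1$ and $F_2(1/(qy))G_2$. Once holomorphy is established, deforming each coordinate torus back to $C_\mu$ gives \eqref{int to int term mu I eq}.
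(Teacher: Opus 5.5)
Your route is the same as the paper's: the substitution $x\to q^{-\mu}x$, $y\to t^{I}y$, then Lemma~\ref{Lemm adjoint of B}, then moving the shifted tori back to $C_\mu$. Items (b)--(d) reproduce the paper's list of poles of $\rho(x,y)B_{\mu,I}(tx^{-1},q^{-1}y^{-1})\Psi_{n,m;N,M}$, and they correctly show these poles avoid the region $|q|^{\mu_i}R\le|x_i|\le R$, $r\le|y_j|\le|t|^{-1}r$. (In (c), the surviving $x$-poles $x_i=tq^{\mu_i+k}x_j$ are excluded by $|t|<1$ alone; $|q|<|t|$ is what removes $y_i=qt^{k}y_j$ and $x_i=qy_j$.)

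The gap is item (a), which is false. If $|q|^{\mu_i}R\le|x_i|\le R$, then $R\le|q^{-\mu_i}x_i|\le|q|^{-\mu_i}R$. If $r\le|y_j|\le|t|^{-1}r$ with $j\in I$, then $|t|r\le|ty_j|\le r$. So the point $(q^{-\mu}x,t^{I}y)$ runs over annuli on the \emph{other} side of $C_\mu$, which lie outside $D$ apart from the boundary tori $|x_i|=R$, $|y_j|=r$. Holomorphy of $\varphi$ on $D$ therefore says nothing about $T_{q,x}^{-\mu}T_{t,y}^{I}\varphi$ where you shift the contour.

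The identity genuinely fails under the stated hypothesis. Take $n=1$, $m=M=0$, $\mu=(1)$, so that $\rho=1$ and $B_{\mu,\emptyset}=(1-t)/(1-q)$. Let $\varphi(x)=1/(x-a)$ with $R<|a|<|q|^{-1}R$; then $\varphi$ is holomorphic on $D$ and both sides are defined. The right-hand side minus the left-hand side is the residue contribution at $x=qa$, namely $2\pi i\,\frac{1-t}{1-q}\,a^{\alpha-1}\Psi_{1,0;N,0}(qa;z)\neq0$.

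The paper's own proof has the same lacuna. It lists only the poles of $\rho$, $B_{\mu,I}(tx^{-1},q^{-1}y^{-1})$ and $\Psi_{n,m;N,M}$, and then declares the whole integrand holomorphic on $D$. The identical slip occurs in Lemma~\ref{qde-RhamforMR}, where $T_{q,y}^{-I}\varphi$ is evaluated at $|y_i|=|q|^{-1}$, outside $V$.

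What the argument really uses, and what you should assume, is that $T_{q,x}^{-\mu}T_{t,y}^{I}\varphi$ is holomorphic on $D$. Equivalently, $\varphi$ is holomorphic on the image of $D$ under $(x,y)\mapsto(q^{-\mu}x,t^{I}y)$. This is automatic for Laurent polynomial $\varphi$, which is the case of interest, and Theorem~\ref{Main thm after} needs the correspondingly shifted domain. With that hypothesis, (a) holds by the computation above and the rest of your proof goes through as written.
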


\begin{proof}
By the change of variables $x \to q^{- \mu}x, \ y \to t^{I}y$, we have
	\begin{align}
	&\int_{C_\mu} x^{\alpha} y^{\beta} \varphi (x, y) \rho(x,y) B_{\mu, I}(x, y) 
	\left(
	T_{q, x}^{\mu} T_{t, y}^{-I} \Psi_{n,m;N,M}(x,y;z,w)
	\right)
	\frac{dx}{x}\frac{dy}{y} \notag \\
	=
	&\notag q^{-\alpha|\mu|} t^{\beta|I|} \int_{q^{\mu}\mathbb{T}^n_{R} \times t^{-I}\mathbb{T}^m_{r}} 
	\bigg(
	x^{\alpha} y^{\beta}
	\left(
	T_{q, x}^{-\mu} T_{t, y}^{I} \varphi (x, y) \rho(x,y) B_{\mu, I}(x, y) 
	\right) \\
	&\quad\quad\quad\quad\quad\quad\quad\quad\quad\quad\quad\quad\quad\quad \times
	 \Psi_{n,m;N,M}(x,y;z,w)
	 \bigg)
	\frac{dx}{x}\frac{dy}{y}.
\end{align}
By Lemma \ref{Lemm adjoint of B}, we obtain
	\begin{align}\label{Lem int to int term mu I proof eq1}
	&\int_{q^{\mu}\mathbb{T}^n_{R} \times t^{-I}\mathbb{T}^m_{r}} 
	x^{\alpha} y^{\beta}
	\left(
	T_{q, x}^{-\mu} T_{t, y}^{I} \varphi (x, y) \rho(x,y) B_{\mu, I}(x, y) 
	\right)
	 \Psi_{n,m;N,M}(x,y;z,w)
	\frac{dx}{x}\frac{dy}{y}
	\notag \\
	=
	&\notag\int_{q^{\mu}\mathbb{T}^n_{R} \times t^{-I}\mathbb{T}^m_{r}} 
	\bigg(
	x^{\alpha} y^{\beta}
	\left(
	T_{q, x}^{-\mu} T_{t, y}^{I} \varphi (x, y)
	\right)
	\rho(x,y) B_{\mu, I}(t x^{-1}, q^{-1} y^{-1})\\
	&\quad\quad\quad\quad\quad\quad\quad\quad\quad\quad\quad\quad\quad\quad\quad \times
	 \Psi_{n,m;N,M}(x,y;z,w)
	 \bigg)
	\frac{dx}{x}\frac{dy}{y}.
\end{align}
The poles of the integrand \eqref{Lem int to int term mu I proof eq1} on the right-hand side for $(x, y)  \in (\mathbb{C}\setminus \{0\})^{n+m}$ are as follows:
\begin{align}
&x_{i}-q^{-k}t/z_j = 0 \quad   (k \in \mathbb{Z}_{ \geq 0}, \, 1\leq i \leq n, \, 1\leq j \leq N), \\
&y_{i}-t^{-k-1}/w_j = 0 \quad  (k \in \mathbb{Z}_{ \geq 0}, \, 1\leq i \leq m, \, 1\leq j \leq M), \\
&x_{i}-t q^{k + \mu_{i}}x_j = 0 \quad   (k \in \mathbb{Z}_{ \geq 0}, \, 1\leq i \neq j \leq n), \\
&y_{i}-q t^{k}y_j = 0 \quad   (k \in \mathbb{Z}_{ \geq 0}, \, 1\leq i \neq j \leq m), \\
&\label{dizon1}x_{i}-q y_j = 0 \quad   (\ 1\leq i \leq n, \, 1\leq j \leq m), \\
&x_{i}-t y_j = 0 \quad   (\ 1\leq i \leq n, \, 1\leq j \leq m), \\
&x_{i}-q^{\mu_i +1} t y_j = 0 \quad   (\ 1\leq i \leq n, \, j \in I), \\
&\label{dizon2}x_{i}-q^{\mu_i} t y_j = 0 \quad   (\ 1\leq i \leq n, \, j \notin I).
\end{align}
From assumptions $|z_{j}|<|t|/R , \, |w_{j}|<1/r$ and $|q| < |t|$, the integrand \eqref{Lem int to int term mu I proof eq1} on the right-hand side is holomorphic on $D$.
Therefore, we obtain
\begin{align}\label{Lem int to int term mu I proof eq2}
	&\notag \int_{q^{\mu}\mathbb{T}^n_{R} \times t^{-I}\mathbb{T}^m_{r}} 
	\bigg(
	x^{\alpha} y^{\beta}
	\left(
	T_{q, x}^{-\mu} T_{t, y}^{I} \varphi (x, y)
	\right)
	\rho(x,y) B_{\mu, I}(t x^{-1}, q^{-1} y^{-1})\\
	 &\quad\quad\quad\quad\quad\quad\quad\quad\quad\quad\quad\quad\quad\quad\quad \times
	 \Psi_{n,m;N,M}(x,y;z,w)
	 \bigg)
	\frac{dx}{x}\frac{dy}{y}
	\notag \\
	=
	&\int_{\mathbb{T}^n_{R} \times \mathbb{T}^m_{r}} 
	\bigg(
	x^{\alpha} y^{\beta}
	\left(
	T_{q, x}^{-\mu} T_{t, y}^{I} \varphi (x, y)
	\right)
	\rho(x,y) B_{\mu, I}(t x^{-1}, q^{-1} y^{-1})\notag\\
	&\quad\quad\quad\quad\quad\quad\quad\quad\quad\quad\quad\quad\quad\quad\quad \times
	 \Psi_{n,m;N,M}(x,y;z,w)
	 \bigg)
	\frac{dx}{x}\frac{dy}{y}.
\end{align}
From this equation, we obtain the desired equation \eqref{int to int term mu I eq}.
\end{proof}

\begin{rem}
The integrand on the right-hand side of \eqref{Lem int to int term mu I proof eq1} is holomorphic on $D$ if we assume $R < |q|^{\max_{i} {|\mu_i|} + 1} |t| r$ instead of $r < |q|^{\max_{i} {|\mu_i|}} R$ in Lemma \ref{int to int term mu I}.  
Hence, Lemma \ref{int to int term mu I} holds with $r < |q|^{\max_{i} {|\mu_i|}} R$ replaced by $R < |q|^{\max_{i} {|\mu_i|} + 1} |t| r$.
Therefore, the condition $r < |q|^d R$ in Theorem \ref{Main thm after} below can be replaced by  $R < |q|^{d + 1} |t| r$.
\end{rem}

\begin{rem}
Because of the poles \eqref{dizon1}--\eqref{dizon2}, the radii $r$ and $R$ of path $C_\mu$ depend on $\mu$. 
When $m = 0$, the poles \eqref{dizon1}--\eqref{dizon2} do not appear.
Therefore, the path can be taken independently of $\mu$.
In particular, an integral transform for eigenvalue problem of the generating function $H_{n,0}(x;u)$ for Noumi--Sano operator can be constructed.
See Appendix \ref{Appendix} for details. 
\end{rem}
We restate the main result.
\begin{thm}\label{Main thm after}
Assume $q^{\alpha} = t^{-\beta}$, $r < |q|^d R$, and $|q| < |t|$, where $\alpha, \beta \in \mathbb{Z}$ and $r, R \in \mathbb{R}_{> 0}$. 
Suppose that a function  $\varphi(x, y)$ satisfies the following conditions: 
\begin{itemize}
\item[1.] The function $\varphi(x, y)$ is a simultaneous eigenfunction of $H^{k}_{n, m}$ $(0 \leq k \leq d)$, i.e.
\begin{align}
	H^{k}_{n,m}(t x^{-1},q^{-1}y^{-1}) \varphi(x, y) = E_k \varphi(x, y), \quad 0 \leq k \leq d.
\end{align}
\item[2.]
The function $\varphi(x, y)$ is holomorphic on $D = \{(x, y)\mid |q|^{d}R \leq |x_{i}| \leq R,\, r \leq |y_{j}| \leq |t|^{-1} r \, (1 \leq i \leq n, \, 1 \leq j \leq m) \}$.
\end{itemize}
We put $C_d = \mathbb{T}^n_{R} \times \mathbb{T}^m_{r}$, 
where $\mathbb{T}^s_{p}$ denotes the $s$-dimensional torus of radius $p$.
If $|z_{j}|<|t|/R , \, |w_{j}|<1/r$, then the function 
\begin{align}
\Omega(z, w)= \int_{C_d}  \varphi(x, y) \omega_{n,m;N,M}(x,y;z,w)  \frac{dx}{x}\frac{dy}{y}
\end{align}
satisfies 
\begin{align}
	H^{d}_{N,M}(z,w)\Omega(z, w)
	=
	\left( 
	\sum_{k+l = d}\frac{(t^{N - n} q^{m - M}; q)_k}{(q; q)_k} (t^{1-N}q^M)^k t^{\beta l}E_l
	\right) \Omega(z, w). 
	\end{align}
\end{thm}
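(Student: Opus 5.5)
We follow the Mimachi--Noumi argument reviewed in Section~\ref{RevieMN}. The plan has three steps: move $H^d_{N,M}(z,w)$ onto the kernel, convert $(z,w)$-operators into $(x,y)$-operators via Theorem~\ref{KIDforDNS}, and then transfer the shifts onto $\varphi$ term by term with Lemma~\ref{int to int term mu I}.

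\textbf{Step 1.} Since $|z_j|<|t|/R$ and $|w_j|<1/r$, and since the shifts $T_{q,z}^{\mu}$ and $T_{t,w}^{-I}$ shrink the moduli of $z$ and $w$, every term of $H^d_{N,M}(z,w)$ acts on the integrand within the convergence region. Hence
\[
H^d_{N,M}(z,w)\Omega(z,w)=\int_{C_d}\varphi(x,y)\,x^\alpha y^\beta\rho(x,y)\bigl(H^d_{N,M}(z,w)\Psi_{n,m;N,M}(x,y;z,w)\bigr)\frac{dx}{x}\frac{dy}{y}.
\]
By \eqref{KID rank d} this equals
\[
\sum_{k+l=d}\frac{(t^{N-n}q^{m-M};q)_k}{(q;q)_k}(t^{1-N}q^M)^k\,J_l,
\qquad
J_l=\int_{C_d}\varphi\,x^\alpha y^\beta\rho\,\bigl(H^l_{n,m}(x,y)\Psi\bigr)\frac{dx}{x}\frac{dy}{y}.
\]

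\textbf{Step 2.} It remains to show $J_l=t^{\beta l}E_l\,\Omega(z,w)$ for each $0\le l\le d$. Expand $H^l_{n,m}(x,y)$ as the finite sum over pairs $(\mu,I)$ with $|\mu|+|I|=l$, with coefficients $(t^{1-n}q^m)^{|\mu|}(-t)^{|I|}q^{\binom{|I|}{2}}B_{\mu,I}(x,y)T^\mu_{q,x}T^{-I}_{t,y}$. Each such $\mu$ has $\max_i\mu_i\le l\le d$. Therefore $r<|q|^dR\le|q|^{\max\mu_i}R$, and the domain in Lemma~\ref{int to int term mu I} is contained in $D$. So Lemma~\ref{int to int term mu I} applies to every term on the common contour $C_\mu=C_d$. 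It turns each term into the factor $q^{-\alpha|\mu|}t^{\beta|I|}$ times the integral of
\[
x^\alpha y^\beta\,\bigl(T^{-\mu}_{q,x}T^{I}_{t,y}\varphi\bigr)\,\rho\,B_{\mu,I}(tx^{-1},q^{-1}y^{-1})\,\Psi .
\]

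\textbf{Step 3.} This is the step to check carefully: the prefactors must recombine into the operator $H^l_{n,m}(tx^{-1},q^{-1}y^{-1})$ times a constant. Under $x\mapsto tx^{-1}$ and $y\mapsto q^{-1}y^{-1}$, the shift $T^\mu_{q,x}$ acting on a function of $tx^{-1}$ becomes $T^{-\mu}_{q,x}$ in the variable $x$, and likewise $T^{-I}_{t,y}$ becomes $T^{I}_{t,y}$. Hence $\sum_{(\mu,I)}(\text{coeff})\,B_{\mu,I}(tx^{-1},q^{-1}y^{-1})T^{-\mu}_{q,x}T^{I}_{t,y}$ is exactly $H^l_{n,m}(tx^{-1},q^{-1}y^{-1})$.

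The extra factor $q^{-\alpha|\mu|}t^{\beta|I|}$ must be constant on the degree-$l$ part. Using $q^{\alpha}=t^{-\beta}$ gives $q^{-\alpha|\mu|}=t^{\beta|\mu|}$, so the factor is $t^{\beta(|\mu|+|I|)}=t^{\beta l}$. This explains the hypothesis $q^\alpha=t^{-\beta}$. Hence
\[
J_l=t^{\beta l}\int_{C_d}\bigl(H^l_{n,m}(tx^{-1},q^{-1}y^{-1})\varphi\bigr)\,\omega_{n,m;N,M}\,\frac{dx}{x}\frac{dy}{y}=t^{\beta l}E_l\,\Omega(z,w)
\]
by condition~1. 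Summing over $k+l=d$ gives the stated eigenvalue.

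The main obstacle is Step~2. The contour-shift lemma must hold simultaneously for all $(\mu,I)$ of degree at most $d$, which is why the holomorphy domain $D$ and the constraint $r<|q|^dR$ are phrased uniformly in $d$. A secondary point is justifying the interchange of $H^d_{N,M}(z,w)$ with the integral in Step~1.
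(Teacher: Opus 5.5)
Your proof is correct and follows the paper's argument: apply the kernel identity \eqref{KID rank d}, then use Lemma~\ref{int to int term mu I} term by term on the common contour $C_d$ (valid since $\max_i\mu_i\le d$), then recombine the terms into $t^{\beta l}H^l_{n,m}(tx^{-1},q^{-1}y^{-1})$ using $q^{\alpha}=t^{-\beta}$. One small correction to Step~1: $T^{-I}_{t,w}$ sends $w_j\mapsto t^{-1}w_j$ and so enlarges $|w_j|$; the interchange is instead justified because, for $|y_i|=r$, the integrand is holomorphic in $w_j$ on $|w_j|<1/(|t|r)$ (the nearest pole of $1/(ty_iw_j;t)_\infty$ is at $w_j=1/(ty_i)$), which contains $t^{-1}w_j$ whenever $|w_j|<1/r$.
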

\begin{proof}
Using formula \eqref{KID rank d}, we have
\begin{align}
& H^{d}_{N,M}(z,w)\Omega(z, w) \notag \\
&\notag =
 \sum_{k+l = d}
 \bigg(
 \frac{(t^{N - n} q^{m - M}; q)_k}{(q; q)_k} (t^{1-N}q^M)^k t^{\beta l}\\
&\quad \quad \quad \times  \int_{C_d}  \varphi(x, y) x^{\alpha} y^{\beta}\left(H_{n, m}^l(x, y) \Psi_{n,m;N,M}(x,y;z,w) \right) \rho(x, y)  \frac{dx}{x}\frac{dy}{y}
 \bigg)
.
\end{align} 
By Lemma \ref{int to int term mu I}, 
\begin{align}
&\int_{C_d}  \varphi(x, y) x^{\alpha} y^{\beta}\left(H_{n, m}^l(x, y) \Psi_{n,m;N,M}(x,y;z,w) \right) \rho(x, y)  \frac{dx}{x}\frac{dy}{y} \notag \\
&=
\sum_{|\mu|+|I|=l}\int_{C_d}  \varphi(x, y) x^{\alpha} y^{\beta} B_{\mu, I}(x, y) \left(T_{q, x}^{\mu} T_{t, y}^{-I} \Psi_{n,m;N,M}(x,y;z,w) \right) \rho(x, y)  \frac{dx}{x}\frac{dy}{y} \notag\\
&=
\sum_{|\mu|+|I|=l}q^{-\alpha|\mu|} t^{\beta|I|}
	 \int_{C_d}
	 \bigg(
	  x^{\alpha} y^{\beta}
	  \left(
	T_{q, x}^{-\mu} T_{t, y}^{I} \varphi (x, y)
	\right)
	\rho(x,y) B_{\mu, I}(t x^{-1}, q^{-1} y^{-1})\notag\\
	&\quad\quad\quad\quad\quad\quad\quad\quad\quad\quad\quad\quad\quad\quad\quad \times
	 \Psi_{n,m;N,M}(x,y;z,w) 
	  \bigg)
	\frac{dx}{x}\frac{dy}{y}. 
\end{align} 
By assumption $q^{\alpha} = t^{-\beta}$, we obtain
\begin{align}
&\sum_{|\mu|+|I|=d}q^{-\alpha|\mu|} t^{\beta|I|}
	 \int_{C_d}
	 \bigg(
	  x^{\alpha} y^{\beta}
	  \left(
	T_{q, x}^{-\mu} T_{t, y}^{I} \varphi (x, y)
	\right)
	\rho(x,y) B_{\mu, I}(t x^{-1}, q^{-1} y^{-1})\notag\\
	 &\quad\quad\quad\quad\quad\quad\quad\quad\quad\quad\quad\quad\quad\quad\quad \times
	 \Psi_{n,m;N,M}(x,y;z,w) 
	 \bigg)
	\frac{dx}{x}\frac{dy}{y} \notag \\
&=t^{\beta l}
\int_{C_d}
	  x^{\alpha} y^{\beta}
	  \left(
	H^{l}_{n,m}(t x^{-1},q^{-1}y^{-1}) \varphi (x, y)
	\right)
	\rho(x,y) 
	 \Psi_{n,m;N,M}(x,y;z,w) 
	\frac{dx}{x}\frac{dy}{y}.
\end{align} 
This completes the proof of the theorem.
\end{proof}
\section{Summary and discussion}\label{secsum}
In this paper, we gave an integral transform associated with eigenvalue problem of $H_{n, m}^{d}(x, y)$ in Theorem \ref{Main thm}.
We obtained this result by the kernel identity \eqref{KIDforDNSeq}, self-adjointness  \eqref{adjoint of B} and $q$-de Rham theory \eqref{int to int term mu I eq}.

There are many related problems.
We mention two of them. 

\begin{itemize}
\item 
In this paper, we gave an integral transform associated with eigenvalue problem of $H_{n, m}^{d}(x, y)$. 
However, we did not obtain an integral transform associated with eigenvalue problem of the generating function $H_{n, m}(x, y; u)$.
It is important to derive an integral transform associated with eigenvalue problem of $H_{n, m}(x, y; u)$. 
We note that an integral transform associated with the generating function $H_{n, 0}(x;u)$ of Noumi--Sano operators can be constructed by the same method.
See Appendix \ref{Appendix} for details. 

\item
The Ruijsenaars--Macdonald $q$-difference operators and the Noumi--Sano $q$-difference operators have elliptic analogues \cite{NS21, R87}. 
The deformed Noumi--Sano $q$-difference operators have also elliptic analogues \cite{HLNR2}.
It is important to consider an elliptic analogue of integral transform for deformed Noumi--Sano operators.
We remark that an integral transform for the elliptic Ruijsenaars operators is given in \cite{LNS}.
\end{itemize}


\appendix

\section{The case of Noumi--Sano operator}\label{Appendix}
In this appendix, we construct an integral transform for the eigenvalue problem of the Noumi--Sano operator $H_{n, 0}(x;u)$.
We use the same method as in Section \ref{Proof of Main Theorem}.

We recall notations for the case of $m=0$: 
\begin{align}
&H_{n,0}(x;u)=\sum_{\mu\in(\mathbb{Z}_{\geq0})^n}(t^{1-n} u)^{|\mu|}B_{\mu,\emptyset}(x)T_{q,x}^{\mu}, \\
&B_{\mu, \emptyset}(x)=\frac{\Delta(q^\mu x)}{\Delta(x)}\prod_{i,j=1}^n\frac{(tx_i/x_j;q)_{\mu_i}}{(qx_i/x_j;q)_{\mu_i}}, \\
&\rho(x) = \prod_{1 \leq i \neq j \leq n}(x_i/x_j; q)_{\infty}/(t x_i/x_j; q)_{\infty}, \\
&\Psi_{n,0;N,0}(x;z) = \prod_{i=1}^n\prod_{j=1}^N\frac{(x_iz_j;q)_\infty}{(x_iz_j/t;q)_\infty}, \\
&\omega_{n,0;N,0}(x;z)=x^{\alpha}\Psi_{n,0;N,0}(x;z) \rho(x).
\end{align} 
\begin{lemm}
Assume $R \in \mathbb{R}_{> 0}$ and $\alpha \in \mathbb{Z}$.
Suppose a function $\varphi(x)$ is holomorphic on $D = \{x\mid |q|^{\mu_i}R \leq |x_{i}| \leq R \ (1 \leq i \leq n) \}$. 
If $|z_{j}|<|t|/R$, then we have
	\begin{align}\label{int to int term mu I eq Appendix}
	&\int_{C} x^{\alpha}  \varphi (x) \rho(x) B_{\mu, \emptyset}(x) 
	\left(
	T_{q, x}^{\mu}  \Psi_{n,0;N,0}(x;z)
	\right)
	\frac{dx}{x} \notag \\
	=
	&q^{-\alpha|\mu|}
	 \int_{C}
	  x^{\alpha} 
	  \left(
	T_{q, x}^{-\mu} \varphi (x)
	\right)
	\rho(x) B_{\mu, \emptyset}( x^{-1})
	 \Psi_{n,0;N,0}(x;z) 
	\frac{dx}{x}, 
	\end{align}
where  $C= \mathbb{T}^n_{R} $.

\end{lemm}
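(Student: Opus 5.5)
The plan is to follow the proof of Lemma \ref{int to int term mu I} in the special case $m=0$. In that case the $y$-variables and the mixed poles \eqref{dizon1}--\eqref{dizon2} are absent, which is why no condition relating two radii appears.

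First I substitute $x \to q^{-\mu}x$ in the left-hand side. Since $\alpha\in\mathbb{Z}$, the factor $x^{\alpha}$ is single-valued and produces $q^{-\alpha|\mu|}$. The measure $dx/x$ is invariant, and the cycle becomes $q^{\mu}\mathbb{T}^n_R$. This gives
\begin{align*}
q^{-\alpha|\mu|}\int_{q^{\mu}\mathbb{T}^n_R} x^{\alpha}\left(T_{q,x}^{-\mu}\varphi(x)\rho(x)B_{\mu,\emptyset}(x)\right)\Psi_{n,0;N,0}(x;z)\frac{dx}{x}.
\end{align*}
Next I apply Lemma \ref{Lemm adjoint of B} with $m=0$, which gives $T_{q,x}^{-\mu}(B_{\mu,\emptyset}(x)\rho(x)) = B_{\mu,\emptyset}(tx^{-1})\rho(x)$. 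The coefficient $B_{\mu,\emptyset}(x)$ is homogeneous of degree $0$: $\Delta(q^\mu x)/\Delta(x)$ has degree $0$, and the remaining factors depend only on the ratios $x_i/x_j$. Hence $B_{\mu,\emptyset}(tx^{-1})=B_{\mu,\emptyset}(x^{-1})$, which matches the form of the right-hand side of \eqref{int to int term mu I eq Appendix}.

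It remains to deform the cycle $q^{\mu}\mathbb{T}^n_R$ back to $\mathbb{T}^n_R$. I do this one coordinate at a time, moving $|x_i|$ from $|q|^{\mu_i}R$ to $R$ by Cauchy's theorem. For this it suffices that the integrand $x^{\alpha}\,(T_{q,x}^{-\mu}\varphi)(x)\,\rho(x)B_{\mu,\emptyset}(x^{-1})\Psi_{n,0;N,0}(x;z)$ is holomorphic on $D$.

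I check holomorphy factor by factor.
\begin{itemize}
\item[(i)] $T^{-\mu}_{q,x}\varphi(x)=\varphi(q^{-\mu}x)$: on $D$ the point $q^{-\mu}x$ has $|q^{-\mu_i}x_i|$ ranging over $[R,|q|^{-\mu_i}R]$. So I should check whether the hypothesis is meant after the shift, or equivalently verify that the shifted domain is consistent with how the paper applies the lemma. This bookkeeping mirrors the earlier Lemma \ref{int to int term mu I}, and I take the domain exactly as stated there.
\item[(ii)] $\Psi_{n,0;N,0}$: its poles are at $x_i=q^{-k}t/z_j$ with $k\geq0$. These satisfy $|x_i|\ge |t|/|z_j|>R$, so they lie outside $D$ by the assumption $|z_j|<|t|/R$.
\item[(iii)] $\rho(x)B_{\mu,\emptyset}(x^{-1})$: after cancellations its only possible poles are of the form $x_i = tq^{k+\mu_i}x_j$ with $k\ge0$, as in the list of poles in the proof of Lemma \ref{int to int term mu I}. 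On $D$ we have $|x_i/x_j|\ge |q|^{\mu_i}R/R=|q|^{\mu_i}$, while $|tq^{k+\mu_i}|=|t||q|^k|q|^{\mu_i}<|q|^{\mu_i}$ since $|t|<1$. Hence these poles also avoid $D$.
\end{itemize}

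The main obstacle I expect is step (iii): writing $\rho(x)B_{\mu,\emptyset}(x^{-1})$ explicitly and confirming that all apparent poles cancel except those at $x_i=tq^{k+\mu_i}x_j$. This includes the factor $\Delta(q^{\mu}x^{-1})/\Delta(x^{-1})$, whose zeros of $\Delta(x)$ are cancelled by the zeros of $(x_i/x_j;q)_\infty$ in $\rho$. I would carry this out by combining $(x_i/x_j;q)_\infty/(tx_i/x_j;q)_\infty$ with $(tx_j/x_i;q)_{\mu_j}/(qx_j/x_i;q)_{\mu_j}$ pairwise for each unordered pair $\{i,j\}$.
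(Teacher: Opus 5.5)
Your three steps are the paper's argument (the appendix lemma is proved ``in a similar manner'' to Lemma \ref{int to int term mu I}). Your checks (ii), (iii) and the degree-$0$ homogeneity $B_{\mu,\emptyset}(tx^{-1})=B_{\mu,\emptyset}(x^{-1})$ are correct; the surviving poles $x_i=tq^{k+\mu_i}x_j$ avoid $D$ using only $|t|<1$.

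However, the point you raised in (i) is a genuine gap, not bookkeeping, and ``taking the domain as stated'' cannot close it. On $D$ the factor $\varphi(q^{-\mu}x)$ samples $\varphi$ on $\{R\le|x_i|\le|q|^{-\mu_i}R\}$, where the hypothesis says nothing. Indeed, the right-hand side of the identity already evaluates $\varphi$ on $|x_i|=|q|^{-\mu_i}R$. The paper's proof makes the same unjustified step when it asserts the shifted integrand is holomorphic on $D$, and as written the statement is false. Take $n=N=1$, $\mu=1$, $\alpha=0$, so that $\rho=1$ and $B_{\mu,\emptyset}=(1-t)/(1-q)$, and take $\varphi(x)=1/(x-c)$ with $R<|c|<|q|^{-1}R$. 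Then $\varphi$ is holomorphic on $D=\{|q|R\le|x|\le R\}$, but after your substitution the integrand has a pole at $x=qc$ inside $D$. Moving the circle $|x|=|q|R$ to $|x|=R$ therefore picks up $2\pi i\,\frac{1-t}{1-q}\,\Psi_{1,0;1,0}(qc;z)/c$, which is nonzero since $|qcz|<|t|<1$.

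The repair is to assume that $\varphi$ is holomorphic on $q^{-\mu}D=\{R\le|x_i|\le|q|^{-\mu_i}R\}$, equivalently that $T^{-\mu}_{q,x}\varphi$ is holomorphic on $D$. With that hypothesis your argument is a complete proof and coincides with the paper's. Deferring to Lemma \ref{int to int term mu I} does not help, because it has the same defect: it needs $\varphi$ holomorphic on $\{R\le|x_i|\le|q|^{-\mu_i}R,\ |t|r\le|y_j|\le r\}$ rather than on its $D$.
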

This can be checked in a similar manner as the proof of Lemma \ref{int to int term mu I}.
We remark that conditions of the path are independent from $\mu$.  
Therefore, we get an integral transform for eigenvalue problem of the generating function $H_{n, 0}(x;u)$ of the Noumi--Sano operator.
\begin{thm}
Assume $\alpha \in \mathbb{Z}$ and $R \in \mathbb{R}_{> 0}$. 
Suppose that a function  $\varphi(x)$ satisfies the following conditions: 
\begin{itemize}
\item[1.] The function $\varphi(x)$ is an  eigenfunction of $H_{n, 0}(x;u)$, i.e.
\begin{align}
	H_{n, 0}(x^{-1};q^{-\alpha}u) \varphi(x) = E \varphi(x).
\end{align}
\item[2.]
The function $\varphi(x)$ is holomorphic on $D = \{x\mid |q|^{\mu_i}R \leq |x_{i}| \leq R \ (1 \leq i \leq n) \}$.
\end{itemize}
We put $C = \mathbb{T}^n_{R}$, 
where $\mathbb{T}^s_{p}$ denotes the $s$-dimensional torus of radius $p$.
If $|z_{j}|<|t|/R$, then the function 
\begin{align}
\Omega(z)= \int_{C}  \varphi(x) \omega_{n,0;N,0}(x;z)  \frac{dx}{x}
\end{align}
satisfies 
\begin{align}
	H_{n, 0}(z;u) \Omega(z)
	=
	\left( 
	E \frac{(t^{1-n} u; q)_{\infty}}{(t^{1-N} u; q)_{\infty}}
	\right) \Omega(z). 
\end{align}
\end{thm}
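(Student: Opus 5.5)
The plan is to follow the argument of Theorem \ref{Main thm after}, but now for the whole generating function $H_{n,0}(x;u)$ instead of a single homogeneous component. This is possible because, when $m=0$, the contour conditions no longer depend on $\mu$.

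\textbf{Step 1: kernel identity.} First apply the case $m=M=0$ of Theorem \ref{KIDforDNS}, acting under the integral sign. The integrand is holomorphic in $z$ for $|z_j|<|t|/R$ and $x\in\mathbb{T}^n_R$, and the shifted kernels $T_{q,z}^{\nu}\Psi$ stay in that region. This gives
\begin{align*}
H_{N,0}(z;u)\Omega(z)=\frac{(t^{1-n}u;q)_\infty}{(t^{1-N}u;q)_\infty}\int_{C}\varphi(x)\,x^\alpha\rho(x)\bigl(H_{n,0}(x;u)\Psi_{n,0;N,0}(x;z)\bigr)\frac{dx}{x}.
\end{align*}
Here I read the operator on the left of the stated result as $H_{N,0}(z;u)$, since it acts on $z$.

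\textbf{Step 2: termwise transfer.} Expand $H_{n,0}(x;u)=\sum_\mu (t^{1-n}u)^{|\mu|}B_{\mu,\emptyset}(x)T_{q,x}^\mu$ and apply the preceding lemma to each $\mu$. Both sides use the same contour $C=\mathbb{T}^n_R$. Term by term this replaces $B_{\mu,\emptyset}(x)\,T^\mu_{q,x}\Psi$ by
\[
q^{-\alpha|\mu|}B_{\mu,\emptyset}(x^{-1})\,\bigl(T^{-\mu}_{q,x}\varphi\bigr)\Psi .
\]
Collecting the factor $(t^{1-n}u)^{|\mu|}q^{-\alpha|\mu|}=(t^{1-n}q^{-\alpha}u)^{|\mu|}$ reassembles the sum into
\[
\int_C x^\alpha\rho(x)\Psi\,\bigl(H_{n,0}(x^{-1};q^{-\alpha}u)\varphi\bigr)\frac{dx}{x}.
\]
Assumption 1 turns this into $E\,\Omega(z)$, which is the claim.

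The underlying lemma rests on three facts:
\begin{itemize}
\item The adjointness relation $T^{-\mu}_{q,x}(B_{\mu,\emptyset}\rho)=B_{\mu,\emptyset}(x^{-1})\rho$. This is the $m=0$ case of Lemma \ref{Lemm adjoint of B}; the $t$ drops out because $B_{\mu,\emptyset}$ is homogeneous of degree zero.
\item The change of variables $x\to q^{-\mu}x$.
\item A deformation of the contour $q^\mu\mathbb{T}^n_R$ back to $\mathbb{T}^n_R$. The only poles met are $x_i=q^{-k}t/z_j$, which lie outside radius $R$, and $x_i=tq^{k+\mu_i}x_j$. The mixed poles \eqref{dizon1}--\eqref{dizon2} are absent when $m=0$.
\end{itemize}

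\textbf{Main obstacle.} The difficulty is justifying the infinite sum over all $\mu\in(\mathbb{Z}_{\ge0})^n$.

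\emph{Holomorphy domain.} The hypothesis on $D$ depends on $\mu$, so it must be read as holomorphy on $0<|x_i|\le R$, i.e. on the union of all the annuli.

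\emph{Cross poles.} The poles $x_i=tq^{k+\mu_i}x_j$ cause no trouble. On the deformed contour the $\rho(x)$ factors and the Pochhammer symbols in $B_{\mu,\emptyset}(x^{-1})$ combine so that these poles cancel, just as in Lemma \ref{qde-RhamforMR}. I would verify this by tracking the factors $(x_i/x_j;q)_\infty/(tx_i/x_j;q)_\infty$ after the shift.

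\emph{Convergence.} To interchange $\sum_\mu$ with $\int_C$, I would use the power series in $u$ and compare coefficients of $u^d$. On both sides only finitely many $\mu$ with $|\mu|=d$ contribute to each coefficient. Both sides are therefore formal power series in $u$, and they agree coefficientwise by the finite argument above.

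The eigenvalue relation in assumption 1 is itself understood coefficientwise in $u$. Then $H_{n,0}(x^{-1};q^{-\alpha}u)\varphi=E(u)\varphi$ holds degree by degree. Multiplying by the $u$-series $(t^{1-n}u;q)_\infty/(t^{1-N}u;q)_\infty$ then gives the stated identity as power series, and hence as analytic functions for $|u|$ small.
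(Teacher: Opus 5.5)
Your route is the paper's. The paper only remarks that the $m=0$ analogue of Lemma~\ref{int to int term mu I} holds on a $\mu$-independent contour, and then reruns the proof of Theorem~\ref{Main thm after} for the whole generating function: the kernel identity with $m=M=0$, the transfer applied for each $\mu$, and reassembly of $\sum_\mu (t^{1-n}q^{-\alpha}u)^{|\mu|}B_{\mu,\emptyset}(x^{-1})T_{q,x}^{-\mu}$ into $H_{n,0}(x^{-1};q^{-\alpha}u)$. Three of your additions are correct and fill in what the paper leaves implicit: reading the left-hand operator as $H_{N,0}(z;u)$, checking $B_{\mu,\emptyset}(tx^{-1})=B_{\mu,\emptyset}(x^{-1})$, and comparing coefficients of $u^d$ so that only finite sums are interchanged with $\int_C$.

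Two of your supporting claims about the contour shift are wrong, however.

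First, the poles $x_i=tq^{k+\mu_i}x_j$ do not cancel. Pair the factor $(x_j/x_i;q)_\infty/(tx_j/x_i;q)_\infty$ of $\rho(x)$ with the $(i,j)$ factor of $B_{\mu,\emptyset}(x^{-1})$. The product is $(1-x_j/x_i)(q^{\mu_i+1}x_j/x_i;q)_\infty/(tq^{\mu_i}x_j/x_i;q)_\infty$, which has genuine poles there for generic $t$. They are harmless only because $|x_i|=|t||q|^{k+\mu_i}|x_j|\le|t||q|^{\mu_i}R<|q|^{\mu_i}R$ on the relevant region. So they lie outside the annuli swept by the deformation. The same holds in Lemma~\ref{qde-RhamforMR}: there the poles $y_a=tq^ky_b$ are avoided by $|t|<|q|$, not cancelled.

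Second, your repair of the $\mu$-dependent hypothesis points in the wrong direction. After the substitution $x\to q^{-\mu}x$, the integrand contains $\varphi(q^{-\mu}x)$. Moving the contour from $q^{\mu}\mathbb{T}^n_R$ to $\mathbb{T}^n_R$ therefore evaluates $\varphi$ on $R\le|x_i|\le|q|^{-\mu_i}R$, not on $|q|^{\mu_i}R\le|x_i|\le R$. Already the right-hand side of the transfer identity, and the eigenvalue equation used on $C$, need $\varphi$ at $|x_i|=|q|^{-\mu_i}R>R$. The $\mu$-independent hypothesis you actually need is holomorphy on $\{x: |x_i|\ge R\ \text{for all } i\}$, not on $\{0<|x_i|\le R\}$. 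This misorientation is inherited from the way $D$ is written in the paper's lemmas, so it is not a departure from the paper's argument. Still, with your ``union of annuli'' reading the contour deformation you describe is not justified; the region must be flipped outward.
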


Due to the explicit formula \cite[equation (5.17)]{NS21}
\begin{align}
	H_{n, 0}(x;u) 1 = \frac{(q^{1-n} u; q)_{\infty}}{(t u; q)_{\infty}} 1, 
\end{align}
we obtain an eigenfunction of $H_{N,0}(z; u)$ as follows:
\begin{cor}
Assume  $\alpha \in \mathbb{Z}$ and $R \in \mathbb{R}_{> 0}$. 
If $|z_{j}|<|t|/R$, then the following equation holds: 
\begin{align}
&\notag H_{n, 0}(z;u)\int_{\mathbb{T}^n_{R}} \omega_{n,0;N,0}(x;z)  \frac{dx}{x} \\
&=
	\left( 
	\frac{(q^{1-n-\alpha} u; q)_{\infty}(t^{1-n} u; q)_{\infty}}{(t q^{-\alpha} u; q)_{\infty}(t^{1-N} u; q)_{\infty}}
	\right) \int_{\mathbb{T}^n_{R} }\omega_{n,0;N, 0}(x;z)  \frac{dx}{x}. 
\end{align}
\end{cor}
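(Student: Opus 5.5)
The plan is to specialize the preceding theorem of this appendix (the integral transform for the generating function $H_{n,0}(x;u)$) to the constant function $\varphi(x)\equiv 1$. The left-hand side operator $H_{n,0}(z;u)$ in the statement is read as $H_{N,0}(z;u)$, the operator in the $N$ variables $z$ acting on $\Omega(z)$, exactly as in that theorem. With $\varphi\equiv 1$ the function $\Omega(z)$ becomes $\int_{\mathbb{T}^n_R}\omega_{n,0;N,0}(x;z)\,\frac{dx}{x}$. So it suffices to verify the two hypotheses of the theorem for $\varphi\equiv1$ and to identify the eigenvalue $E$.

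\textbf{Step 1 (holomorphy).} Condition 2 is trivial: the constant $1$ is holomorphic on all of $(\mathbb{C}\setminus\{0\})^n$. In particular it is holomorphic on the annular region $D$ for every $\mu$, so no constraint on $R$ arises beyond $R>0$. The constraint $|z_j|<|t|/R$ is just the standing assumption of the corollary.

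\textbf{Step 2 (eigenvalue).} I will use the explicit formula $H_{n,0}(x;u)1=\frac{(q^{1-n}u;q)_\infty}{(tu;q)_\infty}$ from \cite[equation (5.17)]{NS21}. Since $T_{q,x}^{\mu}1=1$, this formula says
\begin{align*}
\sum_{\mu}(t^{1-n}u)^{|\mu|}B_{\mu,\emptyset}(x)=\frac{(q^{1-n}u;q)_\infty}{(tu;q)_\infty}
\end{align*}
as power series in $u$. The right-hand side does not depend on $x$, so for each degree in $u$ this is an identity of rational functions in $x$. It therefore remains valid after the substitution $x\mapsto x^{-1}$. It also remains valid after the rescaling $u\mapsto q^{-\alpha}u$, which multiplies the degree-$d$ coefficient by $q^{-\alpha d}$ on both sides. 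Hence
\begin{align*}
H_{n,0}(x^{-1};q^{-\alpha}u)\,1=\frac{(q^{1-n-\alpha}u;q)_\infty}{(tq^{-\alpha}u;q)_\infty}\,1,
\end{align*}
so Condition 1 holds with $E=\frac{(q^{1-n-\alpha}u;q)_\infty}{(tq^{-\alpha}u;q)_\infty}$.

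\textbf{Step 3 (conclusion).} Inserting this $E$ into the conclusion of the theorem gives the eigenvalue
\begin{align*}
E\,\frac{(t^{1-n}u;q)_\infty}{(t^{1-N}u;q)_\infty},
\end{align*}
which is exactly the stated product.

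The only step needing care is Step 2. One must check that the substitution $x\mapsto x^{-1}$ is legitimate. This reduces to the observation that the identity holds coefficientwise in $u$, as an identity of rational functions independent of $x$.
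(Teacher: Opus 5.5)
\textbf{There is a genuine gap in Step 2.} Your reduction is the paper's own: apply the appendix theorem with $\varphi\equiv 1$, reading the operator as $H_{N,0}(z;u)$, and obtain $E$ from the evaluation of $H_{n,0}(x;u)1$ via $x\mapsto x^{-1}$, $u\mapsto q^{-\alpha}u$. Those manipulations are fine. The problem is the input to Step 2. You take the formula $H_{n,0}(x;u)1=(q^{1-n}u;q)_\infty/(tu;q)_\infty$ from Noumi--Sano without checking it, as the paper does, and it is false.

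Already for $n=1$ you have $B_{\mu,\emptyset}(x)=(t;q)_{\mu_1}/(q;q)_{\mu_1}$. The $q$-binomial theorem then gives $H_{1,0}(x;u)1=(tu;q)_\infty/(u;q)_\infty$, which is the reciprocal of the claimed value. In general, take the formula $H^{l}_{n,m}(x,y)1=\frac{(t^nq^{-m};q)_l}{(q;q)_l}(t^{1-n}q^m)^l$ quoted in the paper's introduction, set $m=0$, and sum over $l$. This gives
\[
H_{n,0}(x;u)1=\frac{(tu;q)_\infty}{(t^{1-n}u;q)_\infty},
\]
which is also what the kernel identity with $n=0$ forces. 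Running your Steps 2 and 3 with this value gives $E=(tq^{-\alpha}u;q)_\infty/(t^{1-n}q^{-\alpha}u;q)_\infty$, and hence the eigenvalue
\[
\frac{(tq^{-\alpha}u;q)_\infty\,(t^{1-n}u;q)_\infty}{(t^{1-n}q^{-\alpha}u;q)_\infty\,(t^{1-N}u;q)_\infty},
\]
not the one displayed in the statement.

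\textbf{The statement as written is false.} Take $n=N=1$ and $\alpha=0$. Then $\rho\equiv 1$, and $|xz/t|<1$ on $|x|=R$. Expanding $\frac{(xz;q)_\infty}{(xz/t;q)_\infty}=\sum_k\frac{(t;q)_k}{(q;q)_k}(xz/t)^k$ shows that $\int_{|x|=R}\omega_{1,0;1,0}(x;z)\frac{dx}{x}$ is a nonzero constant. The operator $H_{1,0}(z;u)$ multiplies constants by $(tu;q)_\infty/(u;q)_\infty$, whereas the statement predicts $(u;q)_\infty/(tu;q)_\infty$. Their coefficients of $u$ are $\frac{1-t}{1-q}$ and $-\frac{1-t}{1-q}$, which differ since $t\neq 1$.

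So no proof can establish the displayed identity. A one-variable sanity check of the cited evaluation would have caught this. With the corrected eigenvalue above, your argument goes through verbatim.
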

\begin{rem}
In the case of the deformed Noumi--Sano operator, the path $C_d$ in Theorem \ref{Main thm after} depends on $\mu$, so we do not discuss an integral transform associated with eigenvalue problem of the generating function $H_{n, m}(x, y; u)$.
\end{rem}

\section*{Acknowledgements}
This work is supported by JSPS KAKENHI Grant Number 26KJ0129.

Address:

Taikei Fujii

Department of Mathematics,
Ochanomizu University, Japan. 

E-mail: fujii.taikei@ocha.ac.jp

Takahiko Nobukawa

Department of Mathematical Sciences, 
Aoyama Gakuin University, Japan.

E-mail: nobukawa@math.aoyama.ac.jp
\end{document}